\documentclass[10pt]{amsart} 

\usepackage{amsmath}
\usepackage{amssymb}

\usepackage{graphicx}
\usepackage{xcolor}
\usepackage{subcaption}
\usepackage{multirow}
\usepackage{enumitem}
\usepackage{mathrsfs}
\usepackage[rightcaption]{sidecap}
\usepackage{rotating}
\usepackage{tikz}

\newtheorem{theorem}{Theorem}[section]

\newtheorem{lemma}[theorem]{Lemma}

\theoremstyle{definition}

\theoremstyle{remark}
\newtheorem{remark}[theorem]{Remark}

\newcommand{\calN}{\mathcal{N}}
\newcommand{\calL}{\mathcal{L}}
\newcommand{\N}{\mathbb{N}}
\newcommand{\R}{\mathbb{R}}
\newcommand{\bfD}{\mathbf{D}}
\newcommand{\bfR}{\mathbf{R}}
\newcommand{\bfn}{\mathbf{n}}
\newcommand{\bfx}{\mathbf{x}}
\newcommand{\bfu}{\mathbf{u}}
\newcommand{\bfxi}{\boldsymbol{\xi}}

\newcommand{\resid}{\text{r}}
\newcommand{\ICBC}{\text{ICBC}}

\title[A scaled TW-PINN for traveling wave solutions]{A scaled TW-PINN: A physics-informed neural network for traveling wave solutions of reaction-diffusion equations with general coefficients}

\author{Seungwan Han}
\address{Department of Mathematics $\&$ POSTECH MINDS (Mathematical Institute for Data Science), Pohang University of Science and Technology, Pohang 37673, Korea}
\email{han97@postech.ac.kr}

\author{Kwanghyuk Park}
\address{Graduate School of Artificial Intelligence $\&$ POSTECH MINDS (Mathematical Institute for Data Science), Pohang University of Science and Technology, Pohang 37673, Korea}
\email{pkh0219@postech.ac.kr}

\author{Jiaxi Gu}
\address{Department of Mathematics $\&$ POSTECH MINDS (Mathematical Institute for Data Science), Pohang University of Science and Technology, Pohang 37673, Korea}
\email{jiaxigu@postech.ac.kr}

\author{Jae-Hun Jung}
\address{Department of Mathematics $\&$ POSTECH MINDS (Mathematical Institute for Data Science), Pohang University of Science and Technology, Pohang 37673, Korea}
\email{jung153@postech.ac.kr}

\makeatletter
\@namedef{subjclassname@2020}{\textup{}2020 Mathematics Subject Classification}
\makeatother
\subjclass[2020]{35K57, 68T07}
\keywords{Physics-informed neural network, Reaction-diffusion equation, Traveling wave solution, Scaling transformation, Wave speed, Universal approximation property}

\begin{document}

\maketitle

\begin{abstract}
We propose an efficient and generalizable physics-informed neural network (PINN) framework for computing traveling wave solutions of $n$-dimensional reaction-diffusion equations with various reaction and diffusion coefficients. 
By applying a scaling transformation with the traveling wave form, the original problem is reduced to a one-dimensional scaled reaction-diffusion equation with unit reaction and diffusion coefficients. 
This reduction leads to the proposed framework, termed scaled TW-PINN, in which a single PINN solver trained on the scaled equation is reused for different coefficient choices and spatial dimensions. 
We also prove a universal approximation property of the proposed PINN solver for traveling wave solutions. 
Numerical experiments in one and two dimensions, together with a comparison to the existing wave-PINN method, demonstrate the accuracy, flexibility, and superior performance of scaled TW-PINN. 
Finally, we explore an extension of the framework to the Fisher's equation with general initial conditions.
\end{abstract}

\section{Introduction}
Reaction-diffusion systems constitute a fundamental class of partial differential equations (PDEs), characterizing the interplay between the local reaction kinetics and the diffusive transport. 
They play a central role in physics, chemistry, biology, and ecology, providing a unifying framework for a wide range of phenomena such as excitable media \cite{Field,WinfreeSpiral}, catalytic surface reactions \cite{Jakubith,Ertl}, neuroscience \cite{Hodgkin,FitzHugh}, and population dynamics \cite{Fisher,Skellam}.
Their broad applicability stems from their capability to capture the core mechanisms underlying diverse spatiotemporal processes in natural systems.

The general reaction-diffusion system takes the form
\begin{equation*}
\partial_t \bfu = \nabla \cdot (\bfD \nabla \bfu) + \bfR(\bfu), 
\end{equation*}
where $\bfD$ is a diagonal matrix of diffusion coefficients and $\bfR(\bfu)$ denotes a nonlinear reaction term, typically involving reaction coefficients.
Depending on the reaction term and the diffusion coefficients, the reaction-diffusion system exhibits a variety of qualitatively distinct solution behaviors.
These include traveling waves \cite{Fisher,Saarloos,Xin}, spiral/scroll waves \cite{WinfreeSpiral,WinfreeScroll,Keener}, pattern formation \cite{Turing,Ouyang,MurrayII} and multistable phenomena \cite{Kauffman,Aronson,Fife}.
Within each class, key qualitative features of the solutions, including spatial profiles, characteristic length scales, and steepness, are governed by the diffusion coefficients in $\bfD$ and the parameters in $\bfR(\bfu)$, particularly reaction coefficients.

Physics-informed neural networks (PINNs) \cite{PINN} have recently become a widely used machine learning approach for approximating the solutions of PDEs.
However, PINNs often struggle to resolve solutions with sharp transitions \cite{PINN_fail2021, WangTeng}, a limitation characterized by slow convergence and inefficient learning.
In response to this limitation, some PINN variants have been proposed for discontinuous and sharp solutions \cite{MCCLENNY, LiuLiuXie, vsPINN}.
In \cite{wavePINN}, the wave-PINN method was proposed by introducing a residual weighting scheme to improve the PINN approximation of the sharp traveling wave in Fisher's equation with large reaction coefficients.
Despite this improvement, when the reaction coefficient becomes extremely large, wave-PINN fails to capture the wave front accurately, especially with respect to the predicted wave speed, as shown later in Section \ref{sec:comparison}.
Therefore, it remains challenging to accurately compute the sharp traveling wave front of the reaction-diffusion equation with PINNs when the reaction coefficient is large.

To address this issue of resolving the sharp wave front in the $n$-dimensional reaction-diffusion equation, we first apply a scaling transformation that normalizes the reaction and diffusion coefficients to unity. 
In the resulting scaled equation, the traveling wave profile becomes less steep, making the problem more suitable for PINN training.
By further exploiting the traveling wave form, the original problem is reduced to a one-dimensional scaled reaction-diffusion equation. 
With this reduction, even a PINN with simple architecture can be trained efficiently on the scaled equation.
The proposed PINN solver thus consists only of a wave layer, a single hidden layer, and an output layer.
The wave layer contains a trainable parameter corresponding to the predicted wave speed, enabling direct monitoring of wave speed learning during training and serves as an indicator of whether a training run achieves physical convergence.
Once the PINN solver is trained on the one-dimensional scaled equation, we combine it with the scaling and inverse transformations to construct a scaling PINN framework, referred to as scaled TW-PINN, for computing traveling wave solutions of $n$-dimensional reaction-diffusion equations with different reaction and diffusion coefficients.
Numerical experiments show that our scaled TW-PINN accurately captures the sharp wave front.

The outline of this paper is as follows. 
In Section \ref{sec:method}, we introduce the isotropic scalar reaction-diffusion equation, present the scaling transformation, and review the dimension-independent traveling wave reduction. 
We also summarize four representative reaction terms, each with a corresponding exact traveling wave solution and associated special wave speed.
Section \ref{sec:PINN} proposes the PINN solver, including its architecture, loss functions, and training configuration with convergence analysis, followed by the solution pipeline of the overall scaling PINN framework. 
It is shown that the proposed PINN solver has a universal approximation property for traveling wave solutions in Section \ref{sec:UAT}.
We provide numerical experiments in one and two dimensions, together with a comparison to the existing wave-PINN method, in Section \ref{sec:result}. 
Section \ref{sec:general} further explores an extension of the framework to Fisher's equation with general initial conditions.
The concluding section gives some remarks and outlines future developments.

\section{Scaling and traveling wave form of reaction-diffusion equations} \label{sec:method}
This section introduces the reaction-diffusion equation and its dimension-independent traveling wave reduction.
A key step from the original equation to its reduced form is the scaling transformation, which removes the explicit dependence on the reaction and diffusion coefficients and, more importantly, decreases the sharpness of the wave front. 
As a result, the scaled equation is better suited for PINN training.

\subsection{Reaction-diffusion equations}
\label{sec:RD}
Consider the isotropic scalar reaction-diffusion equation, in which the diffusion coefficient is identical in all spatial directions.
The governing equation takes the form
\begin{equation}
\label{eq:RD}
\partial_t u(\bfx,t) = D \nabla_{\bfx}^2 u(\bfx,t) + R(u(\bfx,t)),
\end{equation}
where $\bfx = (x_1, \cdots, x_n)$ with $n$ the spatial dimension, $D>0$ denotes the diffusion coefficient and $R(u)$ represents the reaction term.
The reaction term is assumed to have the general form \cite{Gu}
\begin{equation*}
R(u) = \rho \, u^p (1 - u^q)(u-a)^r,
\end{equation*}
where $p,q >0$, $ r \in \{0,1\}$, and $a\in(0,1)$.
The exponent $r=0$ corresponds to the monostable nonlinearity, while $r=1$ yields the classical bistable nonlinearity.
The parameter $\rho>0$ is the reaction coefficient that controls the overall strength of the nonlinear kinetics.
In \eqref{eq:RD}, increasing $\rho$ accelerates the local growth or decay of $u$ and hence shortens the characteristic reaction time scale.
Conversely, a decrease in $\rho$ reverses this trend.

\subsection{Scaling}
\label{sec:scaling}
We apply a scaling transformation to \eqref{eq:RD} by writing 
\begin{equation}
\label{eq:scaling} 
\tau = \rho \, t, \qquad \bfxi = \sqrt{\frac{\rho}{D}} \, \bfx, \qquad v(\bfxi,\tau) = u(\bfx,t).
\end{equation}
Then \eqref{eq:RD} becomes 
\begin{equation}
\label{eq:RD_scaling}
\partial_{\tau} v = \nabla_{\bfxi}^2 \, v + v^p (1-v^q)(v-a)^r,
\end{equation}
where both the reaction and diffusion coefficients become one.
It is easy to recover the original variables by the inverse transformation,
\begin{equation}
\label{eq:scaling_inverse}
t = \frac{1}{\rho} \tau, \qquad \bfx = \sqrt{\frac{D}{\rho}} \, \bfxi, \qquad u(\bfx,t) = v(\bfxi,\tau).
\end{equation}
Consequently, once the scaled equation \eqref{eq:RD_scaling} is solved, the solution of the original equation \eqref{eq:RD} is obtained by recovering $u$ from the scaled solution $v$.

\subsection{Dimension-independent traveling wave reduction}
\label{sec:traveling_wave_reduction}
A distinguished class of solutions to \eqref{eq:RD_scaling} is given by traveling waves of the form
\begin{equation}
\label{eq:traveling_wave}
v(\bfxi,\tau) = V(\zeta), \quad \zeta = \bfn \cdot \bfxi - c \tau, 
\end{equation}
where $\zeta$ is the analytical traveling wave coordinate, $\bfn$ denotes a unit vector specifying the direction of propagation and $c$ is the wave speed. 
Under the traveling wave form \eqref{eq:traveling_wave}, the scaled reaction-diffusion equation \eqref{eq:RD_scaling} reduces to the ordinary differential equation (ODE),
\begin{equation}
\label{eq:RD_scaling_ODE}
V'' + c V' + V^p (1-V^q)(V-a)^r = 0,
\end{equation}
where primes denote differentiation with respect to $\zeta$.
This reduction shows that, regardless of the spatial dimension, the traveling wave form leads to the same second-order ODE for the solution $V$.
As a result, the traveling wave solution $V$, as well as the speed $c$, is the same in any spatial dimension.
Thus the traveling wave form is intrinsically one-dimensional, even in higher dimensions.
This property justifies the use of a single PINN solver for computing traveling wave solutions of the $n$-dimensional reaction-diffusion equation.

\subsection{Reaction terms and corresponding exact traveling wave solutions}
\label{sec:reaction_term}
We study four representative reaction-diffusion equations that possess traveling wave solutions: Fisher's, Newell-Whitehead-Segel (NWS), Zeldovich, and bistable equations.
Each equation is characterized by a specific nonlinear reaction term $R(u)$ listed in Table \ref{tab:reaction}.
It is worth noting that the NWS equation with $q=2$ is the Allen-Cahn equation.
The spatially homogeneous equilibrium states are determined solely by the reaction term $R(u)$.
A typical profile of the traveling wave solution $V$ is monotone and connects two equilibrium states.
Specifically, $V(\zeta)$ satisfies
\begin{equation}
\label{eq:equil_state}
\lim_{\zeta \to -\infty} V(\zeta) = v_-, \qquad \lim_{\zeta \to \infty} V(\zeta) = v_+,
\end{equation}
so that the traveling wave represents a transition from the equilibrium state $v_-$ to the other equilibrium state $v_+$.
For the Fisher’s, NWS, and Zeldovich equations, the equilibrium states are $(v_-, v_+) = (1, 0)$, whereas for the bistable equation, they are $(v_-, v_+) = (a, 1)$.
\begin{table}[ht]
\centering
\caption{Reaction terms $R(u)$ for Fisher's, NWS, Zeldovich, and bistable equations.}
\label{tab:reaction}
\begin{tabular}{l c c}
\hline
          & $R(u)$              & Parameter range \\ 
\hline
Fisher    & $\rho\,u(1-u)$      & -- \\
NWS       & $\rho\,u(1-u^q)$    & $q > 0$ \\ 
Zeldovich & $\rho\,u^2(1-u)$    & -- \\
bistable  & $\rho\,u(1-u)(u-a)$ & $0<a<1$ \\
\hline
\end{tabular}
\end{table}

All four equations admit the exact traveling wave solution in closed form with a special wave speed $c$, as summarized in Table \ref{tab:RD_exact}.
These explicit solutions reveal that the steepness of the wave front scales proportionally to $\sqrt{\rho/D}$.
In particular, large values of $\rho/D$ lead to the thin transition layer with pronounced gradients, which poses significant challenges for both traditional numerical methods \cite{Gu} and neural network approaches \cite{wavePINN}.
\begin{table}[ht]
\centering
\caption{Special wave speed $c$ and its corresponding closed-form traveling wave solution $u(\bfx,t)$ for Fisher's, NWS, Zeldovich, and bistable equations.}
\label{tab:RD_exact}
\begin{tabular}{c c c}
\hline
          & $c$                                       & $u(\bfx,t)$ \\ 
\hline
Fisher    & $5 \sqrt{\frac{\rho D}{6}}$               & $\frac{1}{\left\{ 1+\exp{\left[\sqrt{\frac{\rho}{6D}} (\bfn \cdot \bfx-ct) \right]} \right\}^2 }$ \\
NWS       & $\frac{q+4}{\sqrt{2q+4}} \sqrt{\rho D} $  & $\left\{ \frac{1}{2} +\frac{1}{2} \tanh{\left[ -\frac{q}{2\sqrt{2q+4}}\sqrt{ \frac{\rho}{D}} (\bfn \cdot \bfx-ct) \right]}  \right\}^{\frac{2}{q}}$ \\
Zeldovich & $\sqrt{\frac{\rho D}{2}}$                 & $\frac{1}{1+\exp{\left[\sqrt{\frac{\rho}{2D}} (\bfn \cdot \bfx-ct) \right]}}$ \\
bistable  & $-(1+a)\sqrt{\frac{\rho D}{2}}$           & $\frac{1+a}{2} + \frac{1-a}{2} \tanh{\left[ \frac{1-a}{4}\sqrt{ \frac{2\rho}{D}} (\bfn \cdot \bfx-ct) \right]}$ \\
\hline
\end{tabular}
\end{table}

\section{Scaling PINN framework for traveling wave solutions} \label{sec:PINN}
Using the scaling transformation \eqref{eq:scaling}, and the traveling wave coordinate \eqref{eq:traveling_wave} for the traveling wave solution, we can simplify the task of solving a class of $n$-dimensional reaction-diffusion equations \eqref{eq:RD} with the same reaction term but different coefficients to the scaled one-dimensional reaction-diffusion equation,
\begin{equation}
\label{eq:RD_scaling_1D}
\partial_{\tau} v = \partial_{\xi \xi} v + v^p (1-v^q)(v-a)^r.
\end{equation}
To compute the traveling wave solution to this reaction-diffusion equation \eqref{eq:RD_scaling_1D}, we introduce a physics-informed neural network (PINN) solver.

\subsection{Architecture}
\label{sec:architecture}
The proposed solver adopts a PINN architecture, illustrated in Fig. \ref{fig:NN_model}.
\begin{figure}[ht!]
\centering
\includegraphics[width=0.5\columnwidth]{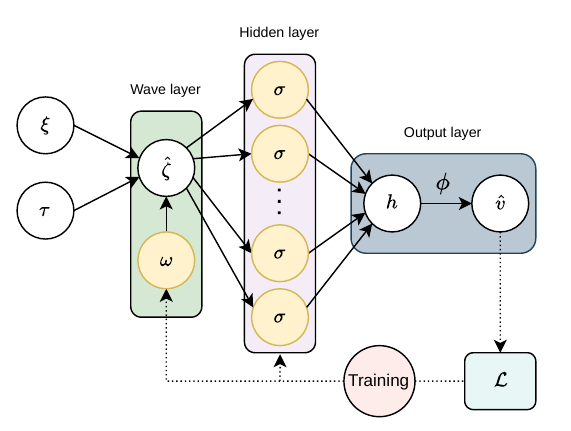}
\caption{Schematic of the PINN architecture.}
\label{fig:NN_model}
\end{figure}
The architecture consists of three layers: a wave layer \cite{Cho, wavePINN} that imposes the traveling wave form, a single hidden layer responsible for function approximation in accordance with the universal approximation theorem, and an output layer that enforces the equilibrium states.

The network takes ($\xi, \tau$) as input.
Motivated by \eqref{eq:traveling_wave}, the wave layer introduces the predicted traveling wave coordinate in one-dimensional form, 
\begin{equation}
\label{eq:wave_coordinate_predict}
\hat{\zeta} = \xi - \omega \, \tau,
\end{equation}
where $\omega$ is a trainable parameter representing the predicted wave speed. 
This parameter allows the predicted wave speed to be monitored directly during training and is used in Section \ref{sec:training} as a diagnostic for physical convergence.
It should be noted that the wave layer used for training here is extended to multiple spatial dimensions in \eqref{eq:wave_layer} employed for the scaling PINN framework in Section \ref{sec:pipeline}.
The hidden layer is implemented as a fully connected layer with $N$ neurons, producing the output
\begin{equation*}
h(\hat{\zeta}) = \sum_{i=1}^{N} c_i \, \sigma(a_i \hat{\zeta} + b_i),
\end{equation*}
where $\sigma$ denotes a sigmoid activation function, defined as any bounded, differentiable real-valued function on $\R$ with strictly positive derivative.
In this paper, our network employs the logistic sigmoid function for $\sigma$.
To ensure that the solution remains within the equilibrium states $(v_-,v_+)$, an output constraint is imposed.
Since the traveling wave solution lies in the interval $(v_-,v_+)$, we introduce the constraint function $\phi$, defined by
\begin{equation}
\label{eq:constraint}
\phi (s)= v_{-} + (v_{+} - v_{-})\frac{1}{1+\exp{(-s)}} , \quad s \in \R.
\end{equation}
This construction guarantees that $\phi(s) \in (v_{-}, v_{+})$ for all $s \in \R$, which eliminates spurious oscillations around the wave front.
Moreover, as $\phi$ is monotone, the network inherits this monotonicity.
The final network output is then given by
\begin{equation}
\label{eq:model}
\hat{v}(\xi, \tau) 
= \phi\!\left( \sum_{i=1}^{N} c_i \, \sigma(a_i \hat{\zeta} + b_i) \right).
\end{equation}

In our proposed network, only the predicted wave speed $\omega$ and the weights of the hidden layer, indicated in yellow in Fig. \ref{fig:NN_model}, are trainable, resulting in a compact and structurally constrained architecture.
As shown in Section \ref{sec:UAT}, this network has the universal approximation property for traveling wave solutions while preserving the prescribed equilibrium states.

\subsection{Loss functions}
\label{sec:loss}
From \eqref{eq:RD_scaling_1D}, the reaction-diffusion equation can be rewritten as
\begin{equation*}
\partial_{\tau} v + \calN[v] = 0, \quad \calN[v] = - \partial_{\xi \xi} - v^p (1-v^q)(v-a)^r,
\end{equation*}
subject to the initial condition $v(\xi,0) = v_0(\xi)$ and some boundary conditions.
In the PINN solver, the governing equation and data constraints are incorporated directly into the loss function.
This approach restricts the approximation space so that the learned solution satisfies the equation together with the prescribed conditions.

Following \cite{PINN}, the loss is given by
\begin{equation*}
\calL = \calL_{\ICBC} + \calL_{\resid}.
\end{equation*}
where
\begin{align*}
\calL_{\ICBC} &= \frac{1}{N_{\ICBC}} \sum_{i=1}^{N_{\ICBC}} \left| \hat{v}(\xi_v^i,\tau_v^i) - v^i \right|^2, \\
\calL_{\resid} &= \frac{1}{N_{\resid}} \sum_{i=1}^{N_{\resid}} \left| \hat{v}_{\tau}(\xi_{\resid}^i,\tau_{\resid}^i) + \calN \! \left[\hat{v}(\xi_{\resid}^i,\tau_{\resid}^i)\right] \right|^2.
\end{align*}
Here, $\{\xi_v^i, \tau_v^i, v^i\}_{i=1}^{N_\ICBC}$ denote the initial and boundary training data on $v(\xi,\tau)$ and $\{\xi_{\resid}^i, \tau_{\resid}^i \}_{i=1}^{N_\resid}$ specify the collocations points for equation residual.

\subsection{Training configuration and convergence analysis}
\label{sec:training}
The training $(\xi, \tau)$-domain for the scaled reaction-diffusion equation \eqref{eq:RD_scaling_1D} is $[-5000,5000] \times [0,2000]$.
We use $N_{\ICBC} = 1024$ collocation points for the initial and boundary loss $\calL_{\ICBC}$, and $N_{\resid} = 1024$ collocation points for the residual loss $\calL_{\resid}$.
All collocation points are generated via Latin hypercube sampling.
The network parameters are initialized with different random seeds.
Training is performed for $100,000$ epochs using the Adam optimizer with an initial learning rate of $0.01$, together with cosine annealing to gradually decrease the learning rate throughout the training process.

Under the configuration specified above, most training runs, each utilizing a different random seed initialization, achieve physical convergence, while a few exhibit spurious convergence.
Figure \ref{fig:converge} shows representative physical and spurious convergent runs for Fisher’s equation by showing the evolution of the training loss $\calL$ and the predicted wave speed $\omega$.
Similar phenomena occur for the NWS, Zeldovich, and bistable equations; Fisher’s equation is presented as a representative example.
\begin{figure}[ht!]
\centering
\includegraphics[width=0.9\columnwidth]{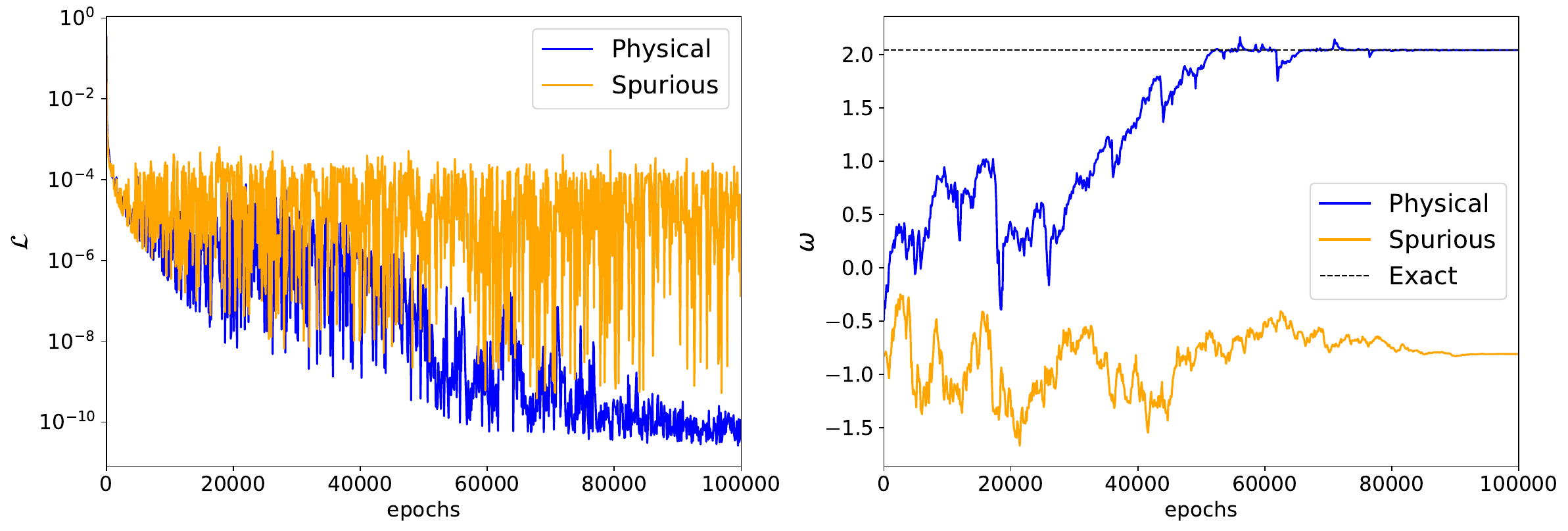}
\caption{Training behavior on the physical (blue) and spurious (orange) convergence for Fisher's equation: log-scale training loss $\calL$ (left) and predicted wave speed $\omega$ (right) compared to the exact wave speed (dashed).}
\label{fig:converge}
\end{figure}
In the physically convergent case, the training loss decays steadily toward $10^{-10}$ with only minor fluctuations.
In the mean while, the predicted wave speed $\omega$ exhibits oscillations of decreasing amplitude and gradually approaches the exact wave speed $c= \tfrac{5}{\sqrt{6}} \approx 2.04$.
In contrast, for spuriously convergent runs, the loss oscillates persistently within the range of $[10^{-9}, 10^{-4}]$ without stabilization, and the predicted wave speed remains separated from the exact wave speed.
This qualitative difference suggests that accurate identification of the wave speed plays an important role in physical convergence.
For each equation, training is therefore repeated with different random seeds until ten physical convergent solvers are obtained.
Those convergent solvers are used for the numerical experiments in Section \ref{sec:result}.
Table \ref{tab:wavespeed_diff} reports the discrepancy between the exact wave speed $c$ in \eqref{eq:traveling_wave} and the predicted wave speed $\omega$ in \eqref{eq:wave_coordinate_predict} for Fisher's, NWS $(q=2)$, Zeldovich, and bistable $(a=0.2)$ equations.
For each case, the mean absolute error $|c-\omega|$ is computed from those ten physically convergent runs, and the corresponding standard deviation is shown in parentheses. 
While Fisher's, Zeldovich, and bistable equations yield consistently small errors, the NWS equation exhibits comparatively large errors.
\begin{table}[ht!]
\centering
\caption{Absolute error in predicted wave speed on original and restricted domains.}
\label{tab:wavespeed_diff}
\resizebox{\textwidth}{!}{
\begin{tabular}{l c c c c c c}
\hline
                              &            & Fisher            & NWS $(q=2)$       & Zeldovich         & bistable $(a=0.2)$ \\
\hline
\multirow{2}{*}{$|c-\omega|$} & original   & 1.31e-4 (8.82e-5) & 2.94e-4 (2.39e-4) & 4.14e-5 (2.89e-5) & 4.23e-5 (4.95e-5) \\
                              & restricted & 1.91e-6 (1.54e-6) & 1.98e-6 (1.16e-6) & 1.73e-6 (2.24e-6) & 1.09e-6 (6.92e-7) \\
\hline
\end{tabular}}
\end{table}
We also observe that the physical convergence behavior varies across equations.
In particular, Fisher's, Zeldovich, and bistable equations predict the wave speed accurately under most random initializations, whereas the NWS equation frequently fails to predict correctly.

We further take a look at the case of spurious convergence.
The traveling wave solution connects the equilibrium states and propagates through the domain, so accurate learning of the wave speed requires sufficient resolution near the wave front.
However, the original training $(\xi, \tau)$-domain is too large ($[-5000,5000] \times [0,2000]$) relative to the number of collocation points ($N_{\ICBC}=1024$ and $N_{\resid}=1024$), resulting in a sparse sampling of the wave front region.
To improve resolution, two approaches are considered: increasing the number of collocation points or restricting the training domain while assuming that outer regions remain close to the equilibrium states.
Empirical observations indicate that domain restriction is significantly more effective.
We restrict the training $(\xi, \tau)$-domain to $[-500,500] \times [0,20]$.
This adjustment increases the effective sampling density near the wave front and concentrates training on the dynamically relevant region.
Figure \ref{fig:div} compares training on the original and restricted domains using the random seed, which fails to converge physically on the original domain.
\begin{figure}[ht!]
\centering
\includegraphics[width=0.9\columnwidth]{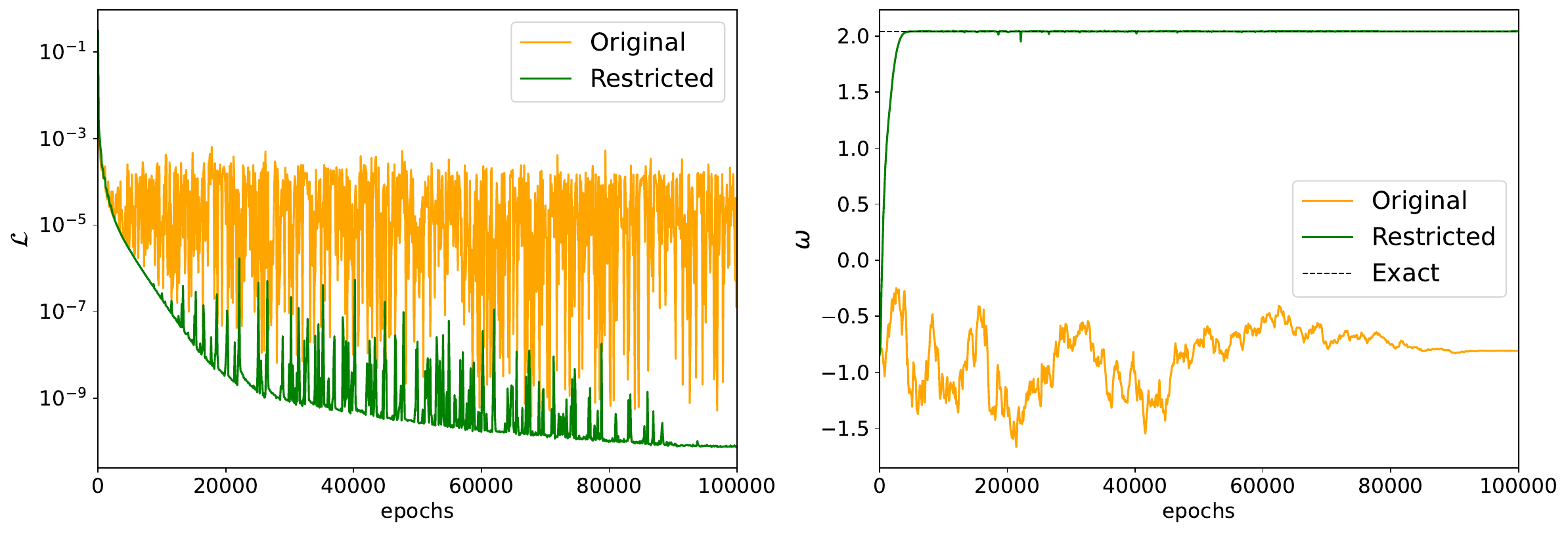}
\caption{Training behavior on the original (orange) and restricted (green) domains for Fisher's equation: log-scale training loss $\calL$ (left) and predicted wave speed $\omega$ (right) compared to the exact wave speed (dashed).}
\label{fig:div}
\end{figure}
On the original domain, the PINN solver fails to learn the correct wave speed, and the loss oscillates persistently within the range of $[10^{-9}, 10^{-4}]$ without stabilization, corresponding to the case of spurious convergence.
On the restricted domain, the predicted wave speed converges rapidly to the exact wave speed without oscillations, and the loss decreases smoothly, which corresponds to the physical convergence.
This behavior is observed for all tested random seeds.
These results show that increasing the effective sampling density near the wave front is crucial for accurate wave speed identification and stable training.
We also notice that physical convergence is faster than in the setting of the original domain.
For consistency, training is repeated using the same random seeds as in the original domain, which yielded convergent physical solutions.
Table \ref{tab:wavespeed_diff} also reports the discrepancy between the exact wave speed $c$ and the corresponding predicted wave speed $\omega$ on restricted domains.
A substantial improvement is observed on the restricted domain.
The improved accuracy of the approximation by the scaling PINN framework on the restricted domain is confirmed by the numerical results in Section \ref{sec:result}.

In addition, separate solvers are trained for the NWS equation on the restricted domain with $q=3$ and $q=4$, respectively.
As seen in Table \ref{tab:wavespeed_diff_nws}, the errors in the predicted wave speed remain on the order of $10^{-6}$.
These results demonstrate that PINN accurately learns the wave speed even for $q \geq 2$.
This robust performance stands in contrast to WENO schemes, which shows the difficulty in accurately capturing the wave speed \cite{Gu}.
\begin{table}[ht!]
\centering
\caption{Absolute error in the predicted wave speed for NWS equation $(q=3,4)$ on the restricted domain.}
\label{tab:wavespeed_diff_nws}
\begin{tabular}{c c c}
\hline
$q$ &  3 & 4\\
\hline
$|c-\omega|$ & 1.50e-6 & 3.63e-6  \\
\hline
\end{tabular}
\end{table}

\subsection{Solution pipeline}
\label{sec:pipeline}
After training the PINN solver in Section \ref{sec:training}, we incorporate it into the proposed scaling PINN framework to compute numerical solutions of the reaction-diffusion equation \eqref{eq:RD}.
Figure \ref{fig:pipeline} illustrates the solution pipeline of the framework.
The procedure consists of three stages: a scaling transformation, approximation of the scaled equation using the trained PINN solver, and an inverse transformation.
\begin{figure}[ht!]
\centering
\includegraphics[width=0.5\columnwidth]{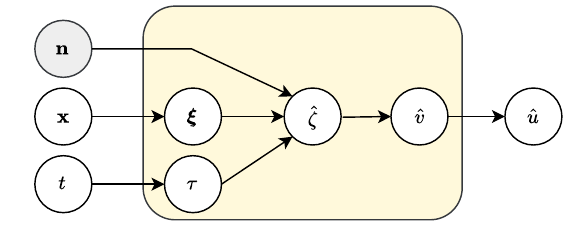}
\caption{Solution pipeline of the proposed scaling PINN framework: a scaling transformation, approximation of the scaled equation using a trained PINN solver, and an inverse transformation.}
\label{fig:pipeline}
\end{figure}

The framework takes $(\bfn,\bfx,t)$ as input.
The spatial variable $\bfx$ and temporal variable $t$ are first processed by a pre-processing layer that performs the scaling transformation \eqref{eq:scaling}, giving the scaled variables $\bfxi$ and $\tau$.
Correspondingly, the original reaction-diffusion equation is transformed into the scaled form in which the reaction and diffusion coefficients are rescaled to unity.
The resulting variables $(\bfn,\bfxi,\tau)$ are then passed through the wave layer,
\begin{equation}
\label{eq:wave_layer}
\hat{\zeta} = \bfn \cdot \bfxi -\omega\,\tau,
\end{equation}
with the learned wave speed $\omega$.
This layer operates in a dimensionally independent manner, motivated by the observation in Section \ref{sec:traveling_wave_reduction} that the traveling wave solution is independent of the spatial dimension.
The predicted traveling wave coordinate $\hat{\zeta}$ is then passed through the single hidden layer and the output constraint to compute the scaled solution $\hat{v}$ by the trained PINN solver.
This step corresponds to solving the scaled equation using the proposed PINN solver, represented by the central block in Fig. \ref{fig:pipeline}.
Finally, the approximate solution $\hat{v}$ is transformed back to $\hat{u}$ through the inverse transformation \eqref{eq:scaling_inverse}.
This step recovers the solution of the original reaction-diffusion equation with the given reaction and diffusion coefficients.
Since the scaling is linear, the inverse transformation is also linear and can therefore be computed efficiently.
This pipeline enables a single trained solver to consistently compute solutions for a wide range of coefficients and arbitrary spatial dimensions within a unified framework.

\section{Universal approximation property for traveling wave solutions} \label{sec:UAT}
In this section, we show that the architecture proposed in Section \ref{sec:architecture} possesses the universal approximation property for traveling wave solutions.

\begin{theorem}[Universal approximation theorem \cite{Cybenko}]
\label{thm:UAT}
Let $\sigma$ be any continuous discriminatory function.
Then finite sums of the form 
\begin{equation*}
G(\zeta) = \sum_{i=1}^{N} c_i \, \sigma(a_i^T \zeta + b_i)
\end{equation*}
are dense in $C(I_n)$.
In other words, given any $F \in C(I_n)$ and $\varepsilon>0$, there is a sum, $G(\zeta)$, of above form, for which
\begin{equation*}
\left| F(\zeta) - G(\zeta) \right| < \varepsilon
\quad \text{for all } \zeta \in I_n.
\end{equation*}
\end{theorem}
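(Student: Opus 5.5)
The plan is to follow Cybenko's original argument, which combines the Hahn--Banach theorem with the Riesz representation theorem. Here $I_n=[0,1]^n$, and a function $\sigma$ is \emph{discriminatory} if, for a finite signed regular Borel measure $\mu$ on $I_n$,
\begin{equation*}
\int_{I_n} \sigma(a^T \zeta + b)\, d\mu(\zeta) = 0 \quad \text{for all } a\in\R^n,\ b\in\R
\end{equation*}
forces $\mu=0$.

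Let $S\subset C(I_n)$ be the set of finite sums $G$ of the stated form. The set $S$ is a linear subspace, because sums and scalar multiples of such expressions are again of the same form. We argue by contradiction and suppose that its closure $\bar S$ is a proper closed subspace of $C(I_n)$.

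By the Hahn--Banach theorem there is a nonzero bounded linear functional $L$ on $C(I_n)$ with $L|_{\bar S}=0$. By the Riesz representation theorem,
\begin{equation*}
L(h)=\int_{I_n} h\,d\mu
\end{equation*}
for some nonzero finite signed regular Borel measure $\mu$. Each ridge function $\zeta\mapsto\sigma(a^T\zeta+b)$ belongs to $S$, so
\begin{equation*}
\int_{I_n} \sigma(a^T\zeta+b)\,d\mu(\zeta)=0 \quad \text{for all } a,b.
\end{equation*}
Since $\sigma$ is discriminatory, this gives $\mu=0$, which contradicts $L\neq 0$. Hence $\bar S=C(I_n)$. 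The $\varepsilon$-statement is then simply the definition of density in the sup norm.

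The functional-analytic step above is routine. The real content, which the paper needs for its architecture, is that the logistic sigmoid (indeed any bounded measurable sigmoidal $\sigma$ with limits $1$ at $+\infty$ and $0$ at $-\infty$) is discriminatory. This is the step I expect to be the main obstacle. I would argue it as follows. Consider
\begin{equation*}
\sigma_\lambda(\zeta)=\sigma\bigl(\lambda(a^T\zeta+b)+\varphi\bigr).
\end{equation*}
As $\lambda\to\infty$, $\sigma_\lambda$ converges pointwise and boundedly to $\gamma$, where $\gamma=1$ on the half-space $\{a^T\zeta+b>0\}$, $\gamma=0$ on $\{a^T\zeta+b<0\}$, and $\gamma=\sigma(\varphi)$ on the hyperplane $\{a^T\zeta+b=0\}$. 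Dominated convergence then gives
\begin{equation*}
0=\sigma(\varphi)\,\mu(\Pi_{a,b})+\mu(H_{a,b}),
\end{equation*}
where $H_{a,b}=\{a^T\zeta+b>0\}$ is the open half-space and $\Pi_{a,b}$ the hyperplane. Letting $\varphi\to-\infty$ yields $\mu(H_{a,b})=0$, and then letting $\varphi\to+\infty$ yields $\mu(\Pi_{a,b})=0$, for all $a,b$.

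Next fix $a$ and define the bounded linear functional
\begin{equation*}
F(h)=\int_{I_n} h(a^T\zeta)\,d\mu(\zeta)
\end{equation*}
on $L^\infty(\R)$. The vanishing on half-spaces and hyperplanes means $F$ vanishes on indicators of intervals. By linearity it vanishes on simple functions, and by density of simple functions in $L^\infty$ it vanishes on $\cos(m\cdot)$ and $\sin(m\cdot)$. So the Fourier transform $\hat\mu(a)=\int e^{i a^T\zeta}\,d\mu(\zeta)$ is zero for every $a$, and injectivity of the Fourier transform of finite measures gives $\mu=0$.

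This establishes the theorem for the logistic $\sigma$ used in Section~\ref{sec:architecture}. In the traveling-wave application the statement is used only with $n=1$, on a compact interval in $\hat\zeta$ after an affine rescaling to $[0,1]$.
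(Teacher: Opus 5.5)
Your Hahn--Banach/Riesz argument is correct and is essentially Cybenko's own proof; the paper gives no proof of its own, cites Cybenko, and only asserts in the remark that follows that its sigmoid is discriminatory. In your supplementary lemma, the step ``by density of simple functions in $L^\infty$'' is loose (the same looseness is in Cybenko): $F$ need not be well defined on Lebesgue classes, and interval step functions are not dense in $L^\infty(\R)$, but since $a^T\zeta$ ranges over a compact interval $J$ and $|F(h)|\le \sup_J|h|\,|\mu|(I_n)$, uniformly approximating $\cos(mu)$ and $\sin(mu)$ on $J$ by step functions repairs it.
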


\begin{remark}
In the above theorem, $I_n=[0,1]^n \subset \R^n$ denotes the $n$-dimensional unit cube, and $a_i^T\zeta$ is the inner product of $a_i\in\R^n$ and $\zeta\in I_n$.
The universal approximation result stated on $C(I_n)$ extends to $C(K)$ for any compact set $K \subset \R^n$ by composing $F$ with an affine bijection between $K$ and a subset of $I_n$.
A function $\sigma$ is said to be discriminatory if, for a measure $\mu \in M(I_n)$, the space of finite signed regular Borel measures on $I_n$,
\begin{equation*}
    \int_{I_n} \sigma(a^T \zeta + b) \, \text{d}\mu(\zeta) = 0,
\end{equation*}
for all $a\in\R^n$ and $b \in \R$ implies that $\mu=0$.
The activation function employed in Section \ref{sec:architecture} satisfies the continuity and discriminatory assumptions of Theorem \ref{thm:UAT}.
Therefore, the corresponding shallow networks are dense in $C(K)$ for each compact set $K \subset \R^n$.
\end{remark}

The following lemma shows that this approximation property remains valid for one-dimensional functions under composition with a constraint function.
\begin{lemma}[Universal approximation under constraints]
\label{thm:UAT_C}
Let $K \subset \R$ be a compact set, and $V: K \to (\alpha, \beta)$ a continuous function.
Suppose $\sigma$ is a sigmoid function, and $\phi: \R \to (\alpha, \beta)$ is continuous, surjective, and strictly monotone.
Then for any $\varepsilon > 0$, there exist an integer $N \in \N$ and constants $a_i, b_i, c_i \in \R$ for $i = 1, \dots, N$ such that for all $\zeta \in K$,
\begin{equation*}
\left| V(\zeta) - \phi \left( \sum_{i=1}^{N} c_i \, \sigma(a_i \zeta + b_i) \right) \right| < \varepsilon.
\end{equation*}
\end{lemma}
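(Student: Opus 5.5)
The plan is to pull the target back through $\phi^{-1}$, approximate that pulled-back function with Theorem \ref{thm:UAT}, and push the approximation forward again using uniform continuity of $\phi$ on a compact interval.

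\emph{Step 1: inverting $\phi$.} Since $\phi:\R\to(\alpha,\beta)$ is continuous, strictly monotone and surjective, it is a homeomorphism onto $(\alpha,\beta)$. Hence $\phi^{-1}:(\alpha,\beta)\to\R$ is continuous, and we set
\begin{equation*}
F := \phi^{-1}\circ V : K \to \R .
\end{equation*}
This $F$ is continuous on the compact set $K$. In particular, $F(K)$ is a compact interval-bounded set, contained in some $[-M,M]$.

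\emph{Step 2: approximating $F$ by a shallow network.} The activation $\sigma$ is bounded, continuous and strictly increasing, so it has distinct finite limits at $\pm\infty$. Such a function is discriminatory by Cybenko's argument, so Theorem \ref{thm:UAT} applies. By the subsequent Remark, it extends to the compact set $K\subset\R$.

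So for any $\delta>0$ there exist $N$ and $a_i,b_i,c_i$ such that
\begin{equation*}
G(\zeta)=\sum_{i=1}^N c_i\,\sigma(a_i\zeta+b_i)
\end{equation*}
satisfies $|F(\zeta)-G(\zeta)|<\delta$ for all $\zeta\in K$.

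\emph{Step 3: transferring the bound through $\phi$.} This is the only genuine obstacle, because $\phi$ is not assumed Lipschitz or globally uniformly continuous in a quantitative way. We handle it by compactness. We take $\delta\le 1$, so that both $F(\zeta)$ and $G(\zeta)$ lie in $J:=[-M-1,M+1]$.

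The function $\phi$ is uniformly continuous on the compact interval $J$. Given $\varepsilon>0$, we choose $\delta\in(0,1]$ such that for $s,t\in J$,
\begin{equation*}
|s-t|<\delta \implies |\phi(s)-\phi(t)|<\varepsilon .
\end{equation*}
Then for every $\zeta\in K$,
\begin{equation*}
|V(\zeta)-\phi(G(\zeta))| = |\phi(F(\zeta))-\phi(G(\zeta))| < \varepsilon .
\end{equation*}
This is exactly the claimed estimate of Lemma \ref{thm:UAT_C}.

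Strict monotonicity of $\sigma$ is not used beyond guaranteeing that $\sigma$ is discriminatory. The surjectivity of $\phi$ is essential, because it is what makes $F=\phi^{-1}\circ V$ well-defined for a $V$ whose values are arbitrary points of $(\alpha,\beta)$.
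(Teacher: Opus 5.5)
Your proposal is correct and follows essentially the same route as the paper: you pull back through the continuous inverse $\phi^{-1}$, approximate $F=\phi^{-1}\circ V$ via Theorem~\ref{thm:UAT}, and push forward using uniform continuity of $\phi$ on a compact interval. Your fixed-margin interval $J=[-M-1,M+1]$ with $\delta\le 1$ is a concrete version of the paper's interval $I_F$ containing $F(K)$ in its interior, and your argument that $\sigma$ is discriminatory supplies a justification the paper only asserts in its remark.
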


\begin{proof}
Since $\phi$ is continuous and bijective, its inverse $\phi^{-1}$ exists and is continuous.
Define $F(\zeta) = \phi^{-1}(V(\zeta))$.
Since $V$ is continuous on $K$, the composition $F = \phi^{-1} \circ V$ is continuous on $K$.
Since $K$ is compact, the image $F(K)$ is compact and thus bounded.
Let $I_F$ be a compact interval such that $F(K)$ is contained in the interior of $I_F$.
The continuity of $\phi$ on the compact set $I_F$ implies the uniform continuity of $\phi|_{I_F}$.
Fix $\varepsilon >0$.
By the uniform continuity of $\phi|_{I_F}$, there exists $\delta >0$ independent of $\zeta \in K$ such that $(F(\zeta)-\delta,F(\zeta)+\delta) \subset I_F$ and if $|F(\zeta) - y|<\delta$, then
\begin{equation*}
|\phi(F(\zeta)) - \phi(y)| < \varepsilon,
\end{equation*}
for all $\zeta \in K$ and $ y \in I_F$.
Applying Theorem \ref{thm:UAT} with $n=1$ to the continuous function $F$ on the compact set $K$, there exist $N \in \N$ and constants $a_i, b_i, c_i \in \R$ such that
\begin{equation*}
\left| F(\zeta) - \sum_{i=1}^{N} c_i \, \sigma(a_i \zeta + b_i) \right| < \delta,
\end{equation*}
for all $\zeta \in K$.
Define
\begin{equation*}
\hat{V}(\zeta) = \phi \left( \sum_{i=1}^{N} c_i \, \sigma(a_i \zeta + b_i) \right).
\end{equation*}
By the choice of $\delta$,
\begin{equation*}
\left| V(\zeta) - \hat{V}(\zeta) \right| < \varepsilon \quad \text{for all } \zeta \in K,
\end{equation*}
which completes the proof.
\end{proof}

Using this lemma, we can now establish that the traveling wave solution can be represented by the proposed solver.
\begin{theorem}[Universal approximation for traveling wave solutions]
\label{thm:exist}
Let $\Omega \subset \R$ be a compact set and let $\hat{v} : \Omega \times [0,T] \to \R$ denote the solver defined in \eqref{eq:model}.
Then for any $\varepsilon > 0$, there exist an integer $N$, constants $a_i, b_i, c_i \in \R$ for $i = 1,\dots,N$ and a predicted wave speed $\omega \in \R$, such that for all $\xi \in \Omega, \ \tau \in [0,T]$,
\begin{equation*}
\left| v(\xi,\tau) - \hat{v}(\xi,\tau) \right| < \varepsilon,
\end{equation*}
where $v: \Omega \times [0,T] \to (v_-, v_+)$ is the traveling wave solution of the scaled reaction-diffusion equation
\begin{equation*}
v_{\tau} = v_{\xi \xi} + v^p (1 - v^q) (v-a)^r, \quad p,q>0,\; r\in\{ 0,1 \},\; a \in (0,1),
\end{equation*}
with wave speed $c$.
\end{theorem}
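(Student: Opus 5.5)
The plan is to reduce the statement to Lemma \ref{thm:UAT_C} by choosing the predicted wave speed equal to the exact one. Since $v$ is a traveling wave solution with speed $c$, Section \ref{sec:traveling_wave_reduction} gives $v(\xi,\tau) = V(\xi - c\tau)$ for a continuous profile $V$ with values in $(v_-,v_+)$; $V$ is in fact $C^2$, since it solves \eqref{eq:RD_scaling_ODE}. We set $\omega = c$. Then the predicted coordinate \eqref{eq:wave_coordinate_predict} coincides with the analytical one, $\hat\zeta = \zeta = \xi - c\tau$, and the solver \eqref{eq:model} takes the form $\hat v(\xi,\tau) = \phi\big(\sum_{i} c_i \sigma(a_i\zeta + b_i)\big)$.

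Next we pass to a compact set in the single variable $\zeta$. The map $(\xi,\tau)\mapsto \xi - c\tau$ is continuous, and $\Omega\times[0,T]$ is compact, so the image
\[
K = \{\xi - c\tau : \xi\in\Omega,\ \tau\in[0,T]\}
\]
is a compact subset of $\R$. Since $\Omega$ is bounded, $K$ is contained in $[\min\Omega - |c|T,\ \max\Omega + |c|T]$. The restriction $V|_K : K \to (v_-,v_+)$ is continuous.

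Now we check the hypotheses of Lemma \ref{thm:UAT_C} with $(\alpha,\beta)$ the open interval spanned by $v_-$ and $v_+$. The constraint $\phi$ in \eqref{eq:constraint} is an affine reparametrization of the logistic function, so it is continuous and strictly monotone. It is increasing when $v_+>v_-$ (bistable case) and decreasing when $v_+<v_-$ (Fisher, NWS, Zeldovich). In both cases it maps $\R$ onto the open interval between $v_-$ and $v_+$. The activation $\sigma$ is the logistic sigmoid, which is continuous and discriminatory, as noted in the remark after Theorem \ref{thm:UAT}. The only care needed here is that the lemma writes $(\alpha,\beta)$ with $\alpha<\beta$, so for decreasing $\phi$ we take $\alpha=v_+$ and $\beta=v_-$. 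The lemma uses only that $\phi$ is a continuous bijection onto that interval, so this relabeling is harmless.

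Applying the lemma yields $N$ and $a_i,b_i,c_i$ with $|V(\zeta) - \phi(\sum_i c_i\sigma(a_i\zeta+b_i))|<\varepsilon$ for all $\zeta\in K$. For any $(\xi,\tau)\in\Omega\times[0,T]$ we have $\zeta=\xi-c\tau\in K$, so substituting gives $|v(\xi,\tau)-\hat v(\xi,\tau)|<\varepsilon$ uniformly, as required.

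The main obstacle is conceptual rather than technical. It consists in making sure the traveling wave profile genuinely takes values in the open interval, so that $\phi^{-1}\circ V$ is defined, and is continuous on $K$. Both properties follow from the definition of the traveling wave solution in the statement ($v$ maps into $(v_-,v_+)$) and from the ODE reduction. Everything else is a direct composition with Lemma \ref{thm:UAT_C}.
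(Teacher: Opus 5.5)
Your proposal is correct and follows essentially the same route as the paper: set $\omega=c$, take $K$ to be the compact image of $\Omega\times[0,T]$ under $(\xi,\tau)\mapsto\xi-c\tau$, and apply Lemma~\ref{thm:UAT_C} to the profile $V$ on $K$. Two of your steps tidy up points the paper's proof passes over: relabeling the interval as $(\alpha,\beta)=(v_+,v_-)$ when $\phi$ is decreasing (Fisher, NWS, Zeldovich), and justifying the continuity of $V$ through the ODE.
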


\begin{proof}
Define
\begin{equation*}
K = \left\{ \xi - c \tau \mid \xi \in \Omega, \ \tau \in [0,T] \right\}.
\end{equation*}
Then $K \subset \R$ is a compact. 
Let $V$ denote the traveling wave profile defined by
\begin{equation*}
V(\zeta) = v(\xi,\tau), \qquad \zeta=\xi-c\tau.
\end{equation*}
Since $v$ is a traveling wave solution, $V$ is well defined on $K$ and satisfies $V(\zeta) \in (v_-,v_+)$ for all $\zeta \in K$.
The function $\phi$ in \eqref{eq:constraint} is continuous, surjective and strictly monotone.
Thus, by Lemma \ref{thm:UAT_C} with $(\alpha, \beta) = (v_-,v_+)$, there exists a function
\begin{equation*}
\hat{V}(\zeta) = \phi \! \left( \sum_{i=1}^{N} c_i \, \sigma \! \left( a_i \, \zeta + b_i \right)\right)
\end{equation*}
such that for all $\zeta \in K$,
\begin{equation*}
\left| V(\zeta) - \hat{V}(\zeta) \right| < \varepsilon.
\end{equation*}
Define $\hat{v}(\xi, \tau) = \hat{V}(\xi - c \tau)$ and choose $\omega=c$.
Then for all $\xi \in \Omega$ and $\tau \in [0,T]$,
\begin{equation*}
\left|v(\xi, \tau) - \hat{v}(\xi,\tau) \right| < \varepsilon.
\end{equation*}
\end{proof}
Theorem \ref{thm:exist} shows that the proposed architecture can approximate traveling wave solutions of the scaled reaction-diffusion equation with arbitrary accuracy, provided that the predicted wave speed $\omega$ coincides with the exact wave speed $c$.

\section{Numerical results for scaled TW-PINN} \label{sec:result}
We present one- and two-dimensional numerical results to demonstrate the potential of the proposed scaling PINN framework for traveling wave solutions, referred to as scaled TW-PINN.
The diffusion coefficient is fixed as $D=1$ in this section.
To assess the accuracy of the scaled TW-PINN, we make use of $L_2$ and $L_\infty$ error norms:
\begin{align*}
 & L_2 = \sqrt{\frac{1}{N} \sum_{i=1}^N \left( u^{(i)} - \hat{u}^{(i)} \right)^2}, \\
 & L_{\infty} = \max_{1 \leqslant i \leqslant N} \left| u^{(i)} - \hat{u}^{(i)} \right|,
\end{align*}
where $u^{(i)}$ denotes the exact solution and $\hat{u}^{(i)}$ denotes the PINN approximation at the $i$-th collocation point of $N$ samples.

\subsection{One-dimensional numerical results}
\label{sec:result1D}
Since the diffusion coefficient $D$ is fixed, the spatial and temporal domains for numerical experiments depend only on the reaction coefficient $\rho$.
Table \ref{tab:domain} summarizes the domains for each reaction-diffusion equation and each value of $\rho$, with the final time chosen so that the traveling wave front remains within the spatial domain.
\begin{table}[ht!]
\centering
\caption{Spatial and temporal domains for each reaction-diffusion equation and each value of $\rho$.}
\label{tab:domain}
\begin{tabular}{l c c c c c}
\hline
          & variable & \multicolumn{4}{c}{$\rho$}   \\
                       \cline{3-6}
          &          & $1$         & $10^2$     & $10^4$      & $10^6$  \\
\hline
Fisher    &  $x,y$   & $[ -5, 25]$ & $[-1, 5]$  & $[-1, 5]$   & $[-1, 5]$ \\
          &  $t$     & $[  0, 10]$ & $[0, 0.2]$ & $[0, 0.02]$ & $[0, 0.002]$ \\
\hline
NWS       &  $x,y$   & $[ -5, 25]$ & $[-1, 5]$  & $[-1, 5]$   & $[-1, 5]$ \\
          &  $t$     & $[  0, 10]$ & $[0, 0.2]$ & $[0, 0.02]$ & $[0, 0.002]$ \\
\hline
Zeldovich &  $x,y$   & $[ -5, 25]$ & $[-1, 5]$  & $[-1, 5]$   & $[-1, 5]$ \\
          &  $t$     & $[  0, 30]$ & $[0, 0.6]$ & $[0, 0.06]$ & $[0, 0.006]$ \\
\hline
bistable  &  $x,y$   & $[-25, 5]$ & $[-5, 1]$  & $[-5, 1]$   & $[-5, 1]$ \\
          &  $t$     & $[ 0, 25]$ & $[0, 0.5]$ & $[0, 0.05]$ & $[0, 0.005]$ \\
\hline
\end{tabular}
\end{table}

For each value of $\rho$, $500$ collocation points are uniformly sampled in both space and temporal time, giving a total of $250,000$ collocation points.
The accuracy test uses ten physically convergent PINN solvers, with different random seed initializations, trained on the original and restricted domains, as described in Section \ref{sec:training}.
The mean and standard deviation of the $L_2$ and $L_{\infty}$ errors for Fisher's, NWS, Zeldovich and bistable equations at different values of $\rho$ are listed in Tables \ref{tab:L2_1D} and \ref{tab:Linf_1D}, respectively.
In general, the error increases as $\rho$ becomes larger.
However, on the restricted domain, NWs and bistable equations achieves the smallest $L_2$ error at $\rho=10^4$.
The scaled TW-PINNs on the restricted domain consistently give more accurate solutions, which can be attributed to their improved estimates of the predicted wave speed $\omega$, as shown in Section \ref{sec:training}.
Notably, for each equation, the $L_2$ errors calculated from scaled TW-PINNs on the restricted domain at $\rho=10^6$ are smaller than those from scaled TW-PINNs on the original domain at $\rho=1$.
These observations indicate that improved estimation of $\omega$ leads to more reliable numerical solutions.
Moreover, at $\rho = 10^6$, the errors by scaled TW-PINNs on the restricted domain are of comparable magnitude across all equations.
In contrast, for scaled TW-PINNs on the original domain, the NWS equation exhibits significantly larger errors than the other equations.
\begin{table}[ht!]
\centering
\caption{$L_2$ error for one-dimensional Fisher's, NWS $(q=2)$, Zeldovich and bistable $(a=0.2)$ equations.}
\label{tab:L2_1D}
\resizebox{\textwidth}{!}{
\begin{tabular}{l l c c c c}
\hline
          &            & \multicolumn{4}{c}{$\rho$}   \\
                         \cline{3-6}
          &            & $1$ & $10^2$ & $10^4$ & $10^6$ \\
\hline
Fisher    & original   & 5.21e-5 (2.41e-5) & 6.60e-5 (3.46e-5) & 1.76e-4 (1.18e-4) & 5.55e-4 (3.75e-4) \\
          & restricted & 5.33e-6 (1.52e-6) & 7.52e-6 (1.77e-6) & 6.49e-6 (1.54e-6) & 1.04e-5 (5.36e-6) \\
\hline
NWS       & original   & 1.13e-4 (8.40e-5) & 1.54e-4 (1.20e-4) & 4.75e-4 (3.86e-4) & 1.50e-3 (1.22e-3) \\
          & restricted & 6.02e-6 (2.37e-6) & 6.14e-6 (2.21e-6) & 5.67e-6 (2.05e-6) & 1.12e-5 (5.89e-6) \\
\hline
Zeldovich & original   & 6.55e-5 (2.44e-5) & 7.69e-5 (3.71e-5) & 2.02e-4 (1.39e-4) & 6.37e-4 (4.45e-4) \\
          & restricted & 1.00e-5 (4.50e-6) & 1.01e-5 (4.53e-6) & 1.01e-5 (8.15e-6) & 2.18e-5 (2.70e-5) \\
\hline
bistable  & original   & 5.04e-5 (3.09e-5) & 5.54e-5 (4.17e-5) & 1.24e-4 (1.42e-4) & 3.88e-4 (4.53e-4) \\
          & restricted & 6.77e-6 (2.62e-6) & 7.41e-6 (2.37e-6) & 6.62e-6 (1.97e-6) & 1.15e-5 (5.54e-6) \\
\hline
\end{tabular}}
\end{table}
\begin{table}[ht!]
\centering
\caption{$L_{\infty}$ error for one-dimensional Fisher's, NWS $(q=2)$, Zeldovich and bistable $(a=0.2)$ equations.}
\label{tab:Linf_1D}
\resizebox{\textwidth}{!}{
\begin{tabular}{l l c c c c}
\hline
          &            & \multicolumn{4}{c}{$\rho$}   \\
                         \cline{3-6}
          &            & $1$ & $10^2$ & $10^4$ & $10^6$ \\
\hline
Fisher    & original   & 1.80e-4 (9.90e-5) & 3.27e-4 (2.07d-4) & 3.14e-3 (2.12e-3) & 3.06e-2 (2.07e-2) \\
          & restricted & 1.82e-5 (5.70e-6) & 1.91e-5 (5.13e-6) & 5.02e-5 (3.60e-5) & 4.47e-4 (3.60e-4) \\
\hline
NWS       & original   & 5.72e-4 (3.74e-4) & 1.14e-3 (7.62e-4) & 1.15e-2 (7.72e-3) & 1.12e-1 (7.35e-2) \\
          & restricted & 1.39e-5 (5.52e-6) & 1.42e-5 (5.37e-6) & 8.10e-5 (4.83e-5) & 8.01e-4 (4.83e-4) \\
\hline
Zeldovich & original   & 2.42e-4 (1.27e-4) & 4.39e-4 (2.82e-4) & 4.30e-3 (2.95e-3) & 4.29e-2 (2.94e-2) \\
          & restricted & 2.65e-5 (1.32e-5) & 2.79e-5 (1.67e-5) & 1.33e-4 (1.83e-4) & 1.31e-3 (1.83e-3) \\
\hline
bistable  & original   & 1.58e-4 (1.28e-4) & 2.64e-4 (2.67e-4) & 2.39e-3 (2.79e-3) & 2.37e-2 (2.77e-2) \\
          & restricted & 1.73e-5 (7.68e-6) & 1.75e-5 (7.52e-6) & 6.04e-5 (3.91e-5) & 5.71e-4 (4.18e-4) \\
\hline
\end{tabular}}
\end{table}

As seen in Section \ref{sec:training}, the scaled TW-PINN can accurately predict the wave speed for the NWS equation even when $q \geq 2$.
Table \ref{tab:nws34} calculates the $L_2$ and $L_{\infty}$ errors for the NWS equations with $q = 3, 4$ under different values of $\rho$.
These results are computed using one PINN solver for each value of $q$ trained on the restricted domain.
The resulting errors are significantly smaller than those by the central WENO scheme \cite{Gu}, where the numerical solution exhibits a noticeable lag behind the exact solution for $q \geq 2$.
This comparison further indicates that the scaled TW-PINNs on the restricted domain capture the wave front more accurately.
\begin{table}[ht!]
\centering
\caption{$L_2$ and $L_{\infty}$ errors for NWS equation $(q=3,4)$ using scaled TW-PINNs on the restricted domain.}
\label{tab:nws34}
\begin{tabular}{l l c c c c}
\hline
$q$ & norm         & \multicolumn{4}{c}{$\rho$}     \\
                     \cline{3-6}
    &              & $1$     & $10^2$  & $10^4$  & $10^6$  \\
\hline
3   & $L_2$        & 6.39e-6 & 5.78e-6 & 4.39e-6 & 8.81e-6  \\
    & $L_{\infty}$ & 1.23e-5 & 1.44e-5 & 6.15e-5 & 6.00e-4  \\
\hline
4   & $L_2$        & 1.36e-5 & 1.26e-5 & 8.48e-6 & 2.08e-5  \\
    & $L_{\infty}$ & 3.67e-5 & 3.67e-5 & 1.69e-4 & 1.53e-3  \\
\hline
\end{tabular}
\end{table}

In Fig. \ref{fig:sol}, we present the scaled TW-PINN solution at the final time specified in Table \ref{tab:domain} for each equation with $\rho=10^6$, compared to the corresponding exact solution.
For each equation, the solution is obtained from a representative scaled TW-PINN on the original domain.
It is seen that our scaled TW-PINN accurately captures the sharp wave front.
\begin{figure}[ht!]
\centering
\makebox[\columnwidth][c]{
\makebox[0.25\columnwidth][c]{\hspace{10mm} \scriptsize Fisher's}
\makebox[0.25\columnwidth][c]{\hspace{7mm} \scriptsize NWS $(q=2)$}
\makebox[0.25\columnwidth][c]{\hspace{3mm} \scriptsize Zeldovich}
\makebox[0.25\columnwidth][c]{\hspace{-1mm} \scriptsize bistable $(a=0.2)$}
}
\includegraphics[width=1\columnwidth]{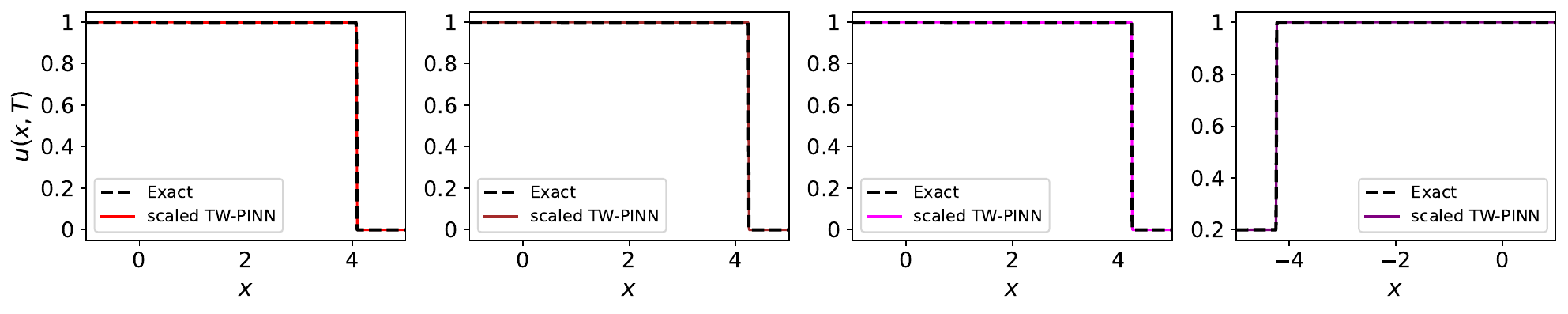}
\caption{Solution profiles, from left to right, for Fisher's equation at $T = 0.002$, NWS equation $(q=2)$ at $T = 0.002$, Zeldovich equation at $T = 0.006$, and bistable equation $(a=0.2)$ at $T = 0.005$. The dashed black line is the exact solution.}
\label{fig:sol}
\end{figure}

To further illustrate the error behavior, we plot the maximum error over the spatial domain against time $t$ in Fig. \ref{fig:max_error_all}, which shows the temporal evolution of this maximum error on a logarithmic scale for the Fisher's, NWS, Zeldovich and bistable equations.
Unlike classical numerical theory, where the error typically accumulates over time, the maximum error in scaled TW-PINN solutions does not necessarily increase monotonically.
\begin{figure}[ht!]
\centering
\makebox[\columnwidth][c]{
\makebox[0.25\columnwidth][c]{\hspace{10mm} \scriptsize Fisher's}
\makebox[0.25\columnwidth][c]{\hspace{7mm} \scriptsize NWS $(q=2)$}
\makebox[0.25\columnwidth][c]{\hspace{3mm} \scriptsize Zeldovich}
\makebox[0.25\columnwidth][c]{\hspace{-1mm} \scriptsize bistable $(a=0.2)$}
}
\includegraphics[width=\columnwidth]{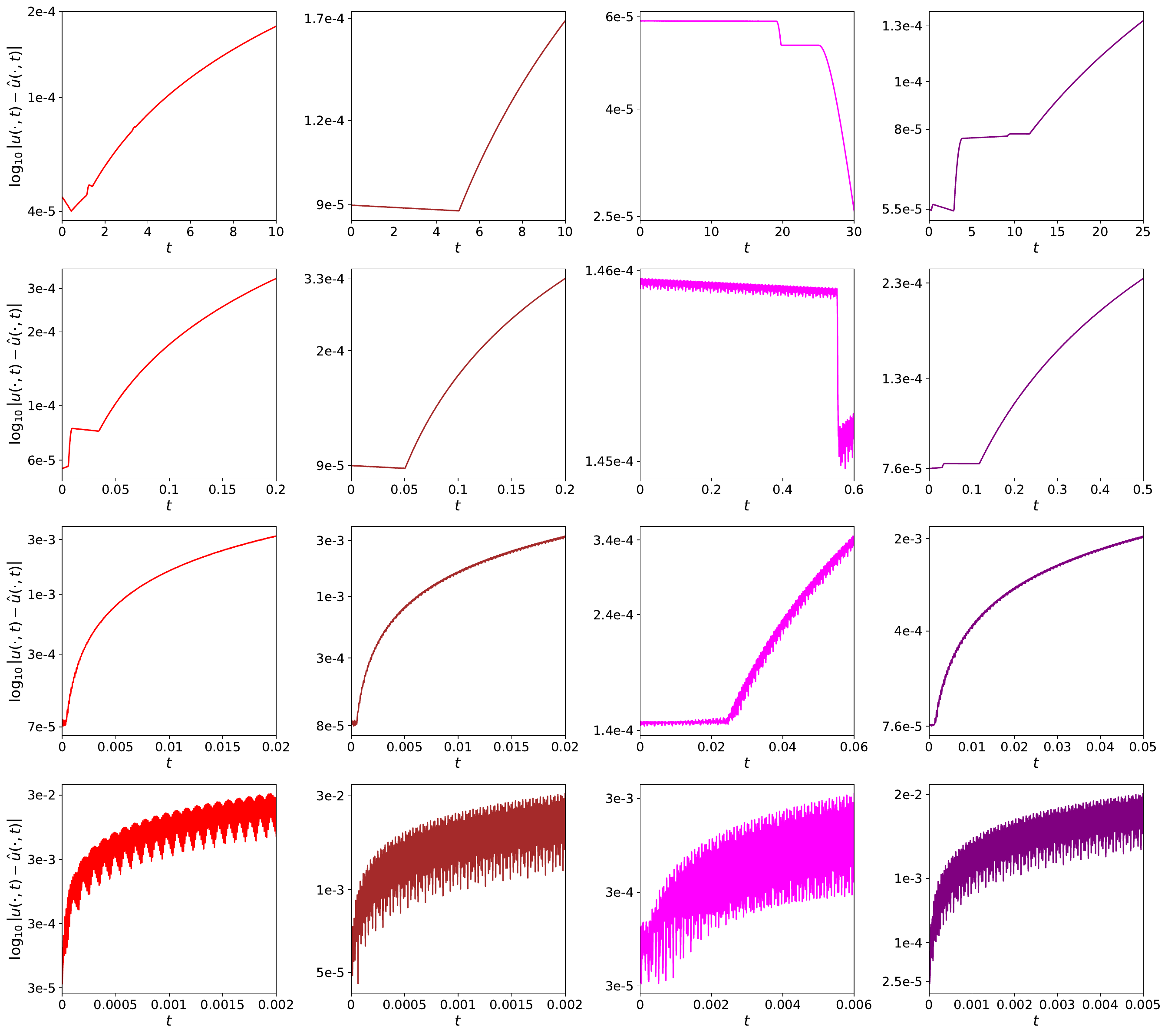}
\caption{Evolution of log-scale maximum errors over time, from left to right, for Fisher's, NWS $(q=2)$, Zeldovich, and bistable $(a=0.2)$ equations. The rows correspond to $\rho=1,10^2,10^4,$ and $10^6$ from top to bottom.}
\label{fig:max_error_all}
\end{figure}

\subsection{Two-dimensional numerical results}
\label{sec:result2D}
In two-dimensional problems, we evaluate the $L_2$ and $L_\infty$ errors by varying the reaction coefficient $\rho$, given two propagation direction vectors $\bfn$.
The spatial and temporal domains for each reaction-diffusion equation and each value of $\rho$ are chosen to be the same as in one-dimensional problems, listed in Table \ref{tab:domain}.

For each value of $\rho$, $100$ collocation points are uniformly sampled in both the spatial and temporal directions, generating a total of $10^6$ collocation points.
We use the same PINN solvers trained on the original and restricted domains as in Section \ref{sec:result1D}.
The $L_2$ and $L_{\infty}$ errors, in terms of mean and standard deviation, for Fisher’s, NWS, Zeldovich and bistable equations at $\rho$ and $\bfn$ are computed in Tables \ref{tab:L2_2D} and \ref{tab:Linf_2D}, respectively. 
The propagation direction $\bfn$ is selected from $\{ (1,1), (1,3) \}$ and normalized to unit length.
Since the direction $(3,1)$ is symmetric to $(1,3)$, it produces equivalent results and is omitted.
The error grows generally with $\rho$, consistent with the one-dimensional results, while showing little variation with respect to $\bfn$.
This is expected because the same PINN solver for stage two is used in both one- and two-dimensional problems, and the remaining difference comes only from $\bfn$ and the number of collocation points.
\begin{table}[ht!]
\centering
\caption{$L_2$ error for two-dimensional Fisher's, NWS $(q=2)$, Zeldovich and bistable $(a=0.2)$ equations.}
\label{tab:L2_2D}
\resizebox{\textwidth}{!}{
\begin{tabular}{l l c c c c c}
\hline
          &            & $\bfn$ & \multicolumn{4}{c}{$\rho$}   \\
                                  \cline{4-7}
          &            &        & $1$               & $10^2$            & $10^4$            & $10^6$ \\
\hline
Fisher    & original   & (1,1)  & 5.07e-5 (2.63e-5) & 6.80e-5 (3.82e-5) & 1.91e-4 (1.28e-4) & 5.35e-4 (3.61e-4) \\
          &            & (1,3)  & 5.02e-5 (2.46e-5) & 6.58e-5 (3.57e-5) & 1.81e-4 (1.21e-4) & 5.67e-4 (3.82e-4) \\
          \cline{2-7}
          & restricted & (1,1)  & 5.12e-6 (1.50e-6) & 6.50e-6 (1.36e-6) & 6.36e-6 (1.48e-6) & 9.95e-6 (5.18e-6) \\
          &            & (1,3)  & 5.14e-6 (1.50e-6) & 6.94e-6 (1.53e-6) & 6.36e-6 (1.47e-6) & 1.04e-5 (5.55e-6) \\
\hline
NWS       & original   & (1,1)  & 1.20e-4 (8.98e-5) & 1.65e-4 (1.29e-4) & 5.60e-4 (4.54e-4) & 5.28e-3 (4.17e-3) \\
          &            & (1,3)  & 1.16e-4 (8.60e-5) & 1.58e-4 (1.23e-4) & 4.87e-4 (3.95e-4) & 1.53e-3 (1.25e-3) \\
          \cline{2-7}
          & restricted & (1,1)  & 6.29e-6 (2.37e-6) & 6.36e-6 (2.19e-6) & 6.19e-6 (2.31e-6) & 3.70e-5 (2.15e-5) \\
          &            & (1,3)  & 6.21e-6 (2.37e-6) & 6.27e-6 (2.19e-6) & 5.81e-6 (2.08e-6) & 1.14e-5 (5.97e-6) \\
\hline
Zeldovich & original   & (1,1)  & 7.04e-5 (2.48e-5) & 8.08e-5 (3.83e-5) & 2.33e-4 (1.59e-4) & 2.24e-3 (1.54e-3) \\
          &            & (1,3)  & 6.81e-5 (2.38e-5) & 7.80e-5 (3.64e-5) & 2.03e-4 (1.37e-4) & 6.34e-4 (4.34e-4) \\
          \cline{2-7}
          & restricted & (1,1)  & 1.11e-5 (5.29e-6) & 1.07e-5 (4.87e-6) & 1.08e-5 (9.44e-6) & 6.95e-5 (9.55e-5) \\
          &            & (1,3)  & 1.08e-5 (4.92e-6) & 1.04e-5 (4.65e-6) & 1.01e-5 (8.08e-6) & 2.14e-5 (2.64e-5) \\
\hline
bistable  & original   & (1,1)  & 5.25e-5 (3.28e-5) & 5.80e-5 (4.46e-5) & 1.38e-4 (1.58e-4) & 1.25e-3 (1.45e-3) \\
          &            & (1,3)  & 5.14e-5 (3.16e-5) & 5.63e-5 (4.26e-5) & 1.27e-4 (1.46e-4) & 3.96e-4 (4.62e-4) \\
          \cline{2-7}
          & restricted & (1,1)  & 7.39e-6 (3.15e-6) & 7.90e-6 (2.82e-6) & 7.05e-6 (2.19e-6) & 3.13e-5 (2.07e-5) \\
          &            & (1,3)  & 7.19e-6 (2.97e-6) & 7.66e-6 (2.59e-6) & 6.80e-6 (2.12e-6) & 1.18e-5 (5.65e-6) \\
\hline
\end{tabular}}
\end{table}
\begin{table}[ht!]
\centering
\caption{$L_{\infty}$ error for two-dimensional Fisher's, NWS $(q=2)$, Zeldovich and bistable $(a=0.2)$ equations.}
\label{tab:Linf_2D}
\resizebox{\textwidth}{!}{
\begin{tabular}{l l c c c c c}
\hline
          &            & $\bfn$ & \multicolumn{4}{c}{$\rho$}   \\
                                  \cline{4-7}
          &            &        & $1$               & $10^2$            & $10^4$            & $10^6$ \\
\hline
Fisher    & original   & (1,1)  & 1.80e-4 (9.90e-5) & 3.26e-4 (2.07e-4) & 3.10e-3 (2.09e-3) & 2.45e-2 (1.66e-2) \\
          &            & (1,3)  & 1.80e-4 (9.90e-5) & 3.27e-4 (2.07e-4) & 3.14e-3 (2.12e-3) & 2.96e-2 (2.00e-2) \\
          \cline{2-7}
          & restricted & (1,1)  & 1.84e-5 (5.43e-6) & 1.91e-5 (5.13e-6) & 4.97e-5 (3.57e-5) & 3.55e-4 (2.86e-4) \\
          &            & (1,3)  & 1.84e-5 (5.49e-6) & 1.91e-5 (5.13e-6) & 4.98e-5 (3.57e-5) & 4.31e-4 (3.49e-4) \\
\hline
NWS       & original   & (1,1)  & 5.21e-4 (4.05e-4) & 1.04e-3 (8.29e-4) & 1.04e-2 (8.42e-3) & 9.92e-2 (7.69e-2) \\
          &            & (1,3)  & 5.22e-4 (4.05e-4) & 1.04e-3 (8.29e-4) & 1.02e-2 (8.25e-3) & 9.40e-2 (7.73e-2) \\
          \cline{2-7}
          & restricted & (1,1)  & 1.36e-5 (5.14e-6) & 1.40e-5 (5.00e-6) & 7.05e-5 (4.10e-5) & 7.01e-4 (4.11e-4) \\
          &            & (1,3)  & 1.36e-5 (5.14e-6) & 1.40e-5 (5.01e-6) & 6.99e-5 (4.01e-5) & 6.28e-4 (3.68e-4) \\
\hline
Zeldovich & original   & (1,1)  & 2.42e-4 (1.27e-4) & 4.38e-4 (2.82e-4) & 4.30e-3 (2.95e-3) & 4.28e-2 (2.92e-2) \\
          &            & (1,3)  & 2.42e-4 (1.27e-4) & 4.39e-4 (2.82e-4) & 4.22e-3 (2.90e-3) & 3.85e-2 (2.64e-2) \\
          \cline{2-7}
          & restricted & (1,1)  & 2.65e-5 (1.32e-5) & 2.77e-5 (1.65e-5) & 1.32e-4 (1.83e-4) & 1.31e-3 (1.83e-3) \\
          &            & (1,3)  & 2.65e-5 (1.32e-5) & 2.79e-5 (1.67e-5) & 1.30e-4 (1.79e-4) & 1.17e-3 (1.64e-3) \\
\hline
bistable  & original   & (1,1)  & 1.58e-4 (1.27e-4) & 2.63e-4 (2.67e-4) & 2.39e-3 (2.79e-3) & 2.37e-2 (2.75e-2) \\
          &            & (1,3)  & 1.58e-4 (1.28e-4) & 2.64e-4 (2.67e-4) & 2.36e-3 (2.76e-3) & 2.15e-2 (2.50e-2) \\
          \cline{2-7}
          & restricted & (1,1)  & 1.73e-5 (7.68e-6) & 1.75e-5 (7.52e-6) & 6.00e-5 (3.89e-5) & 5.73e-4 (4.18e-4) \\
          &            & (1,3)  & 1.73e-5 (7.68e-6) & 1.75e-5 (7.52e-6) & 5.97e-5 (3.87e-5) & 5.13e-4 (3.75e-4) \\
\hline
\end{tabular}}
\end{table}

Figure \ref{fig:err2D} presents the absolute error between the exact and numerical solutions at the final time for each equation with $\rho=10^4, 10^6$ and $\bfn=(1,1)$.
We use a representative scaled TW-PINN on the original domain.
\begin{figure}[ht!]
\centering
\makebox[\columnwidth][c]{
\makebox[0.25\columnwidth][c]{\hspace{3mm} \scriptsize Fisher's}
\makebox[0.25\columnwidth][c]{\hspace{1mm} \scriptsize NWS $(q=2)$}
\makebox[0.25\columnwidth][c]{\hspace{-2mm} \scriptsize Zeldovich}
\makebox[0.25\columnwidth][c]{\hspace{-5mm} \scriptsize bistable $(a=0.2)$}
}
\includegraphics[width=\columnwidth]{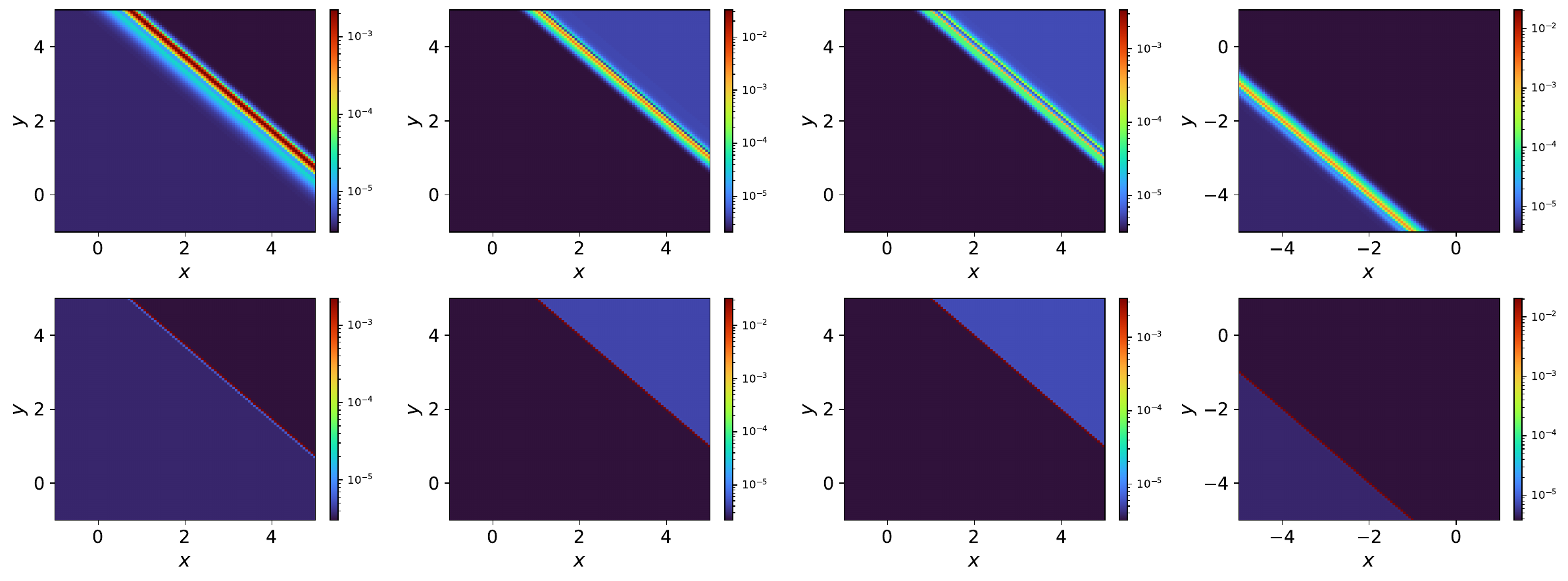}
\caption{Absolute error between the exact and scaled TW-PINN solutions in the filled contour plot, from left to right, for Fisher's equation at $T = 0.002$, NWS equation $(q=2)$ at $T = 0.002$, Zeldovich equation at $T = 0.006$, and bistable equation $(a=0.2)$ at $T = 0.005$ with $\rho=10^4$ (top) and $\rho=10^6$ (bottom).}
\label{fig:err2D}
\end{figure}
As shown in Figure \ref{fig:err2D}, larger errors occur near the wave front, but the error remains small, which means that the scaled TW-PINN captures the traveling wave front accurately.
We also compute the maximum spatial error over $t$, but omit the resulting curves because they are similar to those in Fig. \ref{fig:max_error_all} for the one-dimensional case.

\subsection{Comparison with wave-PINN}
\label{sec:comparison}
We compare the proposed scaled TW-PINN with the existing wave-PINN method in \cite{wavePINN}, which was designed specifically for the Fisher’s equation with the traveling wave solution.
The wave layer for wave-PINN defines the traveling wave coordinate as
\begin{equation*}
\hat{z} = \omega_1 \! \sqrt{\rho}\, x + \omega_2 \, \rho\, t + \omega_3.
\end{equation*}
Then the reaction coefficient is incorporated into the network input, enabling approximation for different $\rho$ while $D$ is fixed.
Consequently the wave-PINN must be retrained if $D$ changes, which increases computational cost and limits the reusability of the trained network.
Moreover, this coordinate definition is applicable only to one spatial dimension because it directly uses the input variables $x$ and $t$ without a mechanism for extension to higher dimensions.
In contrast, our wave layer \eqref{eq:wave_layer} operates on scaled variables and is independent of spatial dimension. 
Then a single scaled TW-PINN can handle various physical coefficients and multiple spatial dimensions.
Figure \ref{fig:wave_layer} compares the the structure of the wave layer in wave-PINN and scaled TW-PINN.
\begin{figure}[ht!]
\centering
\includegraphics[width=0.9\columnwidth]{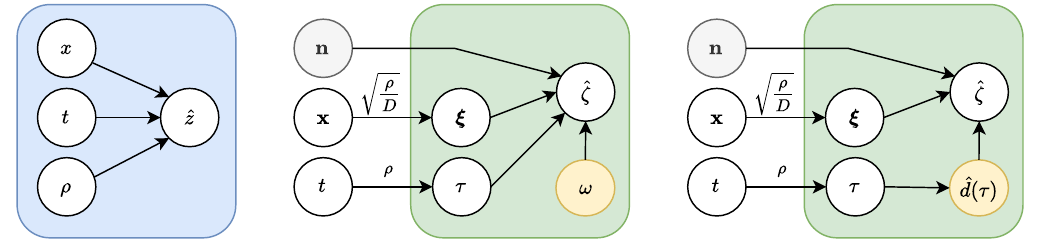}
\caption{Comparison of the structure of the wave layer in wave-PINN (left), scaled TW-PINN (middle) and scaled gTW-PINN (right).}
\label{fig:wave_layer}
\end{figure}
In addition, wave-PINN and scaled TW-PINN differ in the number of hidden layers: the former uses three hidden layers, while the latter uses only one.
For a fair comparison, wave-PINN is trained under a configuration similar to that of the proposed scaled TW-PINN while keeping its original architecture.
The network contains three hidden layers with $20$ neurons each and applies the logistic sigmoid activation function.
In training, we take $1024$ samples from $\rho \in (1,10^6)$ and the initial learning rate is $0.001$.

Table \ref{tab:pinn_compare1D} presents the $L_2$ and $L_{\infty}$ errors of wave-PINN and scaled TW-PINN on the original domain for the Fisher's equation.
\begin{table}[ht!]
\centering
\caption{$L_2$ and $L_{\infty}$ errors of wave-PINN and scaled TW-PINN for Fisher's equation.}
\label{tab:pinn_compare1D}
\resizebox{\textwidth}{!}{
\begin{tabular}{l l c c c c }
\hline
              & norm         & \multicolumn{4}{c}{$\rho$} \\
                               \cline{3-6}
              &              & $1$               & $10^2$            & $10^4$            & $10^6$  \\
\hline
wave-PINN     & $L_2$        & 2.23e-3 (1.67e-3) & 3.11e-3 (2.35e-3) & 9.72e-3 (7.32e-3) & 2.37e-2 (1.44e-2) \\ 
              & $L_{\infty}$ & 8.95e-3 (6.75e-3) & 1.77e-2 (1.34e-2) & 1.71e-1 (1.26e-1) & 7.64e-1 (2.94e-1) \\ 
scaled TW-PINN & $L_2$        & 5.21e-5 (2.41e-5) & 6.60e-5 (3.46e-5) & 1.76e-4 (1.18e-4) & 5.55e-4 (3.75e-4)  \\
              & $L_{\infty}$ & 1.80e-4 (9.90e-5) & 3.27e-4 (2.07e-4) & 3.14e-3 (2.12e-3) & 3.06e-2 (2.07e-2) \\ 
\hline
\end{tabular}}
\end{table}
As shown in Table \ref{tab:pinn_compare1D}, our scaled TW-PINN achieves significantly higher accuracy for all values of $\rho$.
The approximate solutions for $\rho=10^6$ at the initial time $t=0$ and the final time $T=0.002$, along with the pointwise errors on a logarithmic scale, are plotted in Fig. \ref{fig:visual}.
\begin{figure}[ht!]
\centering
\includegraphics[width=0.9\columnwidth]{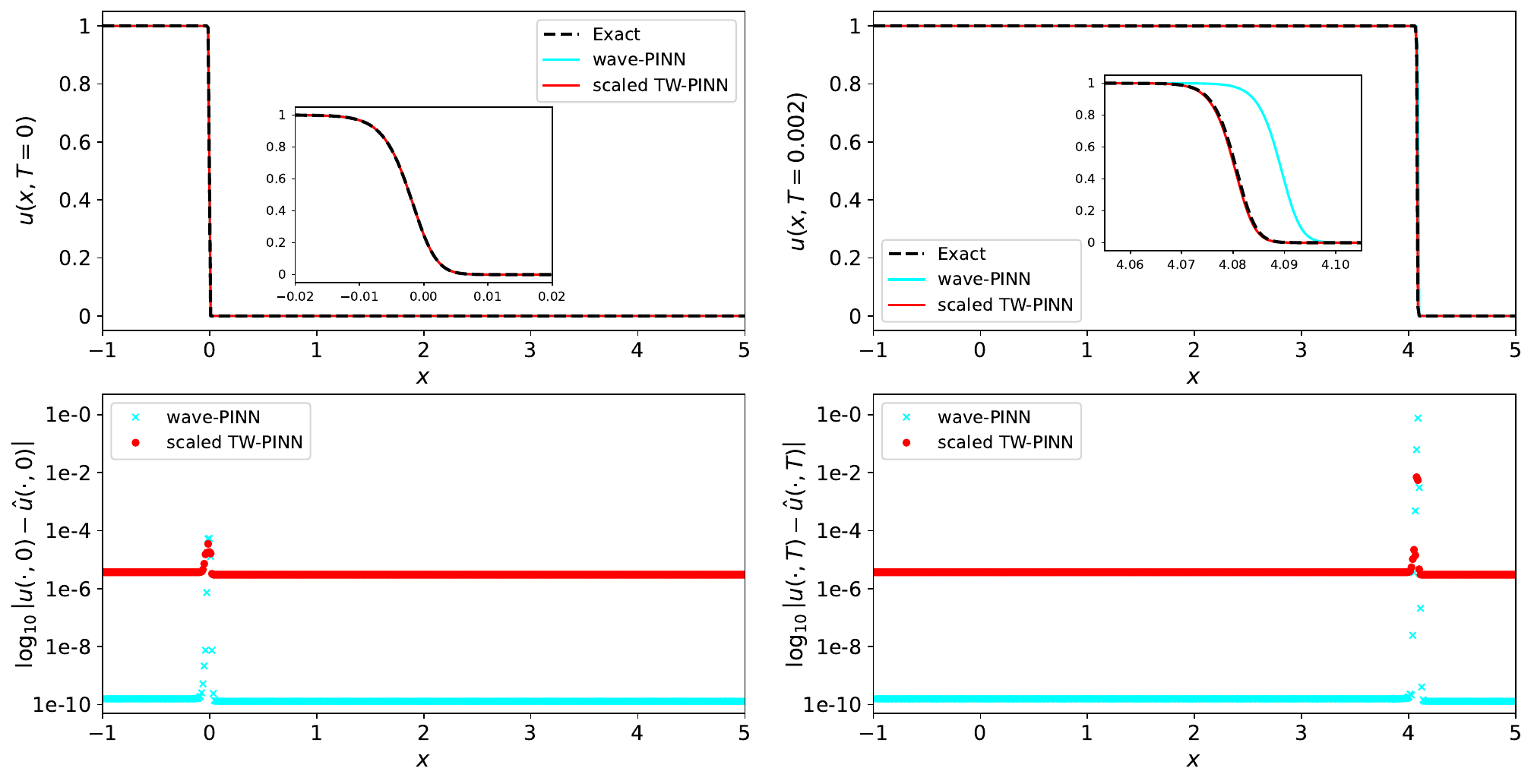}
\caption{Solution profiles (top) and log-scale pointwise errors (bottom) for Fisher's equation with $\rho=10^6$ at the initial time $t=0$ (left) and the final time $T = 0.002$ (right) approximated by wave-PINN (cyan) and scaled TW-PINN (red). The dashed black line is the exact solution.}
\label{fig:visual}
\end{figure}
We can see that both wave-PINN and scaled TW-PINN produce accurate solutions at the initial time $t=0$.
However, at the final time, wave-PINN predicts the wave front with an excessive speed, causing the front to move ahead of its correct location, whereas the proposed scaled TW-PINN provides a more accurate approximation.
While wave-PINN achieves higher accuracy near the equilibrium states, it exhibits substantially larger errors in the transition layer.
In contrast, scaled TW-PINN shows slightly larger errors near the equilibrium states but maintains stable accuracy across the transition layer.
These differences appear to arise from the residual weighting scheme in wave-PINN. 
By assigning smaller weights to the sharp region, wave-PINN tends to learn the equilibrium regions more accurately than the wave front. 
In contrast, the scaling transformation in scaled TW-PINN smooths the sharp transition, allowing the PINN solver in stage two to achieve more balanced learning across the spatial domain.
These results demonstrate that scaled TW-PINN more reliably predicts the traveling wave behavior than wave-PINN, particularly in terms of wave front position and wave speed.

\section{Extension to Fisher's equation with general initial conditions} \label{sec:general}
Previously, we only consider the specific initial condition for which the reaction-diffusion equation admits an exact traveling wave solution in closed form with a special wave speed.
We now turn to more general initial conditions.

We focus on the one-dimensional scaled Fisher's equation,
\begin{equation*}
v_{\tau} = v_{\xi \xi} + v(1-v),
\end{equation*}
subject to the initial condition $v(\xi,0) = v_0(\xi)$.
Notice that the Fisher's equation is chosen as its long time behavior of the traveling wave solution under the general initial condition is extensively studied in the literature \cite{Bramson,MurrayI}.
The same framework also extends to the other equations in this paper.
Suppose that the initial condition satisfies
\begin{equation}
\label{eq:gen_IC}
v_0(\xi) \sim \exp (-\lambda\xi) \quad \text{as } \: \xi \to \infty,
\end{equation}
with $\lambda>0$.
By \cite{MurrayI}, the solution evolves to a traveling wave whose asymptotic wave speed is
\begin{equation*}
c(\lambda) =
\left\{
\begin{array}{ll}
\lambda+\frac{1}{\lambda}, & 0 < \lambda < 1, \\
                        2, & \lambda \geq 1.
\end{array}
\right.
\end{equation*}
For $0 < \lambda < 1$, the asymptotic behavior is not fully characterized.
In the case $\lambda \geq 1$ corresponding to the minimum wave speed $c=2$, the long time behavior of the solution \cite{Bramson} is described by
\begin{equation}
\label{eq:asymptotic}
v(\xi,\tau) \to V \! \left( \xi - 2 \tau + \tfrac{3}{2} \ln \tau + O(1) \right) \quad \text{as } \tau \to \infty,
\end{equation}
with $O(1)$ a bounded spatial shift. 
Thus, the wave speed is not constant.
Accordingly, a generalized traveling wave coordinate is introduced as
\begin{equation*}
\zeta = \xi - d(\tau),
\end{equation*}
where $d(\tau)$ represents a time-dependent shift and the wave speed is given by $d'(\tau)$.
For the constant wave speed, one sets $d(\tau) = c \tau$, which gives $d'(\tau) = c$.
Based on this formulation, the wave layer \eqref{eq:wave_coordinate_predict} is generalized to 
\begin{equation*}
\hat{\zeta} = \xi - \hat{d}(\tau),
\end{equation*}
as shown on the right of Fig. \ref{fig:wave_layer}.
Following \eqref{eq:asymptotic} gives the definition of the predicted wave shift
\begin{equation}
\label{eq:wave_coordinate_predict_general}
\hat{d}(\tau;\lambda) = c(\lambda) \tau - \tfrac{3}{2} \ln (\tau+1) + w \lambda,
\end{equation}
where $\lambda$ corresponds to the exponential decay of the initial condition \eqref{eq:gen_IC}, and $c(\lambda)$ denotes the asymptotic wave speed \eqref{eq:asymptotic}.
The term $\ln (\tau+1)$ avoids the singularity at $\tau=0$ while preserving the same asymptotic behavior for large $\tau$, and the final term $w\lambda$ introduces a trainable correction that accounts for the bounded spatial shift $O(1)$.
Although the asymptotic behavior is known only for $\lambda \geq 1$, we adopt the form \eqref{eq:asymptotic} here as a heuristic approximation and investigate whether it remains informative for describing the wave shift under $0<\lambda<1$.

We consider three initial conditions:
(i) a step function
\begin{equation*}
v_0 (\xi)=
\left\{
\begin{array}{ll}
1, & \xi < 0, \\
0, & \xi \geq 0,
\end{array}
\right.
\end{equation*}
(ii) a logistic-type function of the form
\begin{equation}
\label{eq:fisher_IC}
v_0(\xi) = \frac{1}{\left[ 1 + \exp(\frac{\lambda}{2} \xi) \right]^2},
\end{equation}
with $\lambda=2$, and
(iii) the logistic-type function \eqref{eq:fisher_IC} with $\lambda=\tfrac12$.
The two values of $\lambda$ correspond to $\lambda \geq 1$ and $0 < \lambda < 1$, respectively.
The initial condition of the step function has a rapidly decaying leading edge and behaves similarly to initial conditions with $\lambda \geq 1$.

We take the PINN architecture as in Section \ref{sec:architecture} with the wave layer replaced by \eqref{eq:wave_coordinate_predict_general}.
The same loss function in Section \ref{sec:loss} is employed, and Dirichlet boundary conditions are imposed according to \eqref{eq:equil_state}.
Since a relatively small training domain with the focus on the wave front promotes rapid physical convergence, we choose the training $(\xi, \tau)$-domain as $[-300,900] \times [0,300]$, and pick $N_{\ICBC} = 1024$ and $N_{\resid} = 1024$ collocation points for $\calL_{\ICBC}$ and $\mathcal{L}_{\resid}$, respectively.
The generalized PINN solver is trained for $30,000$ epochs.
At early times, the solution deforms from the initial condition.
Specifically, points along the wave front propagate at different speeds depending on the initial condition, resulting in a deformation of the wave front shape.
For initial conditions (i) and (ii), matching $\lambda \geq 1$, the wave front propagates more rapidly near its leading edge, whereas in the initial condition (iii), corresponding to $0 < \lambda < 1$, the front moves faster near its trailing edge.
Since the PINN solver is designed for traveling waves, it is not easy to capture such deformation.
To address this, the initial and boundary loss, as well as the residual loss, is applied during the first $0.3$ epochs of training, after which only the residual loss is used.
All other training settings are the same as in Section \ref{sec:training}.

The scaling PINN framework for general initial conditions, termed scaled gTW-PINN, is used in the following numerical experiments.
We consider the Fisher's equation with $\rho=10^2$ and $\rho=10^4$, and approximate the corresponding scaled gTW-PINN solutions at final times $T=0.3$ and $T=0.03$.
Since the exact solution is not available for Fisher's equation with these general initial conditions, the reference solution is computed by the central WENO scheme \cite{Rathan,Gu} with $2000$ uniform spatial cells on the domain $[-3, 9]$.
Figure \eqref{fig:ini_sol2} shows the numerical results for $\rho=10^2$. 
We see that scaled gTW-PINN captures the wave front well for the initial conditions (i) and (ii) with $\lambda \geq 1$, validating the asymptotic behavior \eqref{eq:asymptotic}.
For the initial condition (iii), the solution with scaled gTW-PINN travels behind the reference wave front, and the phase error is noticeable.
This discrepancy may be caused by the fact that the predicted wave shift \eqref{eq:wave_coordinate_predict_general} from the asymptotic form \eqref{eq:asymptotic}, is not well suited to the case $0 < \lambda < 1$. 
In particular, the logarithmic phase correction and the accompanying $O(1)$ spatial shift may introduce an inaccurate wave shift.
We plot the numerical solution for $\rho=10^4$ by scaled gTW-PINN together with the reference solution in Fig. \eqref{fig:ini_sol2}.
It is observed that the wave front approximated by scaled gTW-PINN agrees with the reference solution for all three initial conditions.
\begin{figure}[ht!]
\centering
\includegraphics[width=0.9\columnwidth]{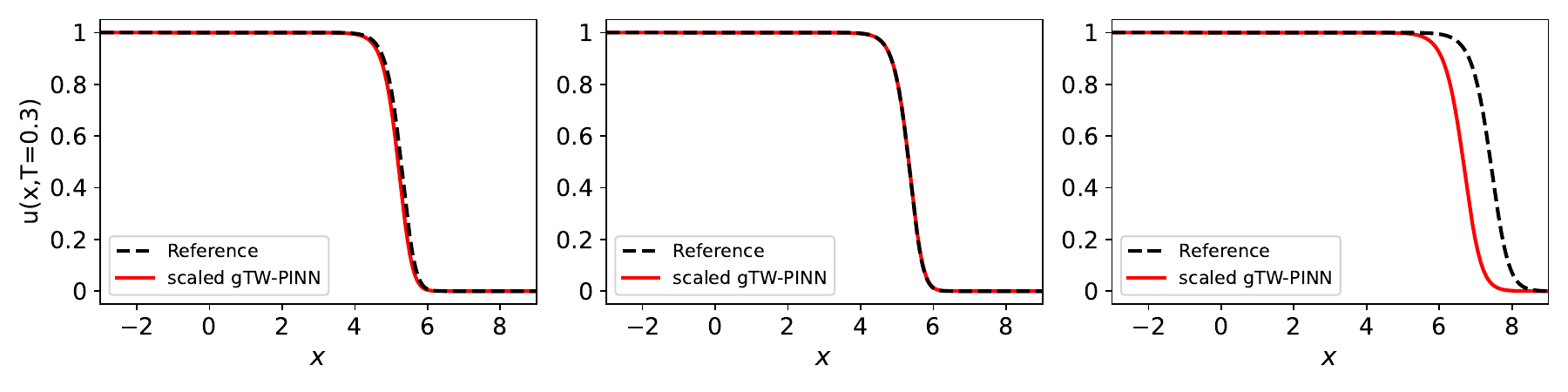}
\caption{Solution profiles for Fisher's equation ($\rho=10^2$) subject to the initial condition (i) the step function (left), (ii) \eqref{eq:fisher_IC} with $\lambda=2$ (middle) and (iii) \eqref{eq:fisher_IC} with $\lambda = \tfrac12$ (right) at the final time $T=0.3$ approximated by scaled gTW-PINN (red). 
The dashed black line is the reference solution.}
\label{fig:ini_sol2}
\end{figure}
\begin{figure}[ht!]
\centering
\includegraphics[width=0.9\columnwidth]{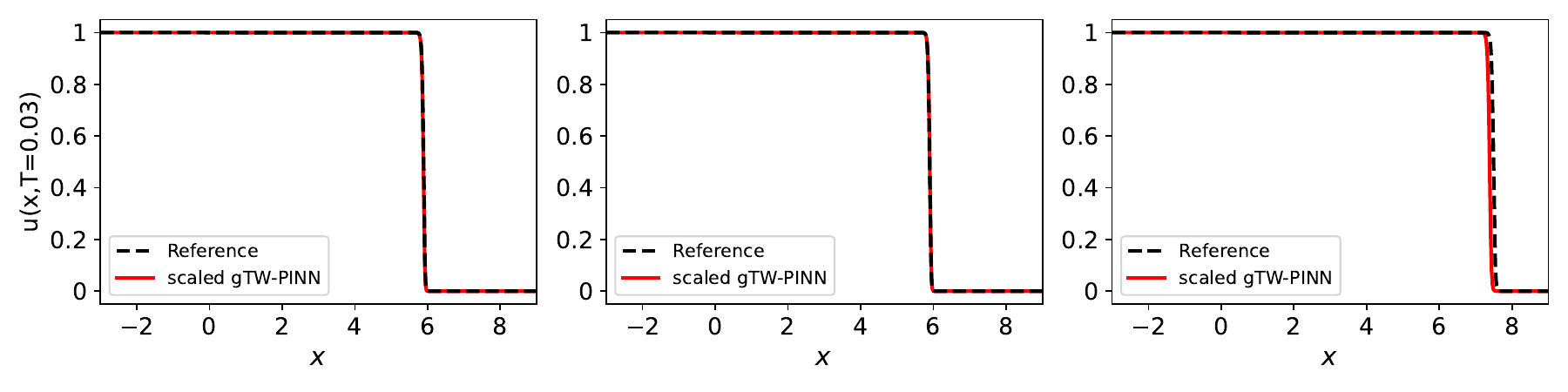}
\caption{Solution profiles for Fisher's equation ($\rho=10^4$) subject to the initial condition (i) the step function (left), (ii) \eqref{eq:fisher_IC} with $\lambda=2$ (middle) and (iii) \eqref{eq:fisher_IC} with $\lambda = \tfrac12$ (right) at the final time $T=0.03$ approximated by scaled gTW-PINN (red). 
The dashed black line is the reference solution.}
\label{fig:ini_sol4}
\end{figure}

\section{Conclusion} \label{sec:conclusion}
In this paper, we develop a scaling PINN framework to solve the reaction–diffusion equation with the traveling wave solution. 
For the sharp wave front from the large reaction coefficient, we apply a scaling transformation that normalizes the reaction and diffusion coefficients, employ a PINN solver for the resulting scaled equation, and recover the solution of the original equation via an inverse transformation.
The wave layer in the PINN solver is designed to preserve the traveling wave form, even in higher spatial dimensions.
We also show that the proposed PINN solver has a universal approximation property for traveling wave solutions.
One- and two-dimensional numerical experiments demonstrate that the proposed scaled TW-PINN accurately captures the traveling wave solution, and the comparison to the existing wave-PINN method show that it achieves higher accuracy and smaller errors near the wave front.
Furthermore, the framework extends to general initial conditions, suggesting its broad applicability.
Future work will investigate the application of PINNs with scaling and inverse transformations to reaction–diffusion systems exhibiting more complex phenomena, such as spiral and scroll waves.

\section*{Acknowledgments}
Jiaxi Gu is supported by POSTECH Basic Science Research Institute Fund, whose NRF grant number is RS-2021-NR060139.
Jae-Hun Jung is supported by National Research Foundation of Korea (NRF) under the grant number 2021R1A2C3009648, POSTECH Basic Science Research Institute under the NRF grant number 2021R1A6A1A10042944, and partially NRF grant funded by the Korea government (MSIT) (RS-2023-00219980).


\providecommand{\bysame}{\leavevmode\hbox to3em{\hrulefill}\thinspace}
\providecommand{\MR}{\relax\ifhmode\unskip\space\fi MR }
\providecommand{\MRhref}[2]{%
  \href{http://www.ams.org/mathscinet-getitem?mr=#1}{#2}
}
\providecommand{\href}[2]{#2}

\end{document}